\def\eps{\varepsilon}
\def\real{\mathbb{R}}
\def\supp{\mathop{\mbox{\normalfont supp}}\nolimits}
\def\sign{\mathop{\mbox{\normalfont sign}}\nolimits}
\def\loc{\mbox{\scriptsize{\normalfont{loc}}}}
\renewcommand\theenumi{\@roman\c@enumi}\makeatother
\newcommand\Mtilde{\stackrel{\sim }{\smash{M}\rule{0pt}{1.1ex}}}
\newtheorem{teo}{Theorem}
\newtheorem{lemma}[teo]{Lemma}
\newtheorem{prop}[teo]{Proposition}
\newtheorem{conjecture}{Conjecture}
\theoremstyle{definition}
\newtheorem*{xrem}{Remark}
\let\oldmarginpar\marginpar
\renewcommand\marginpar[1]{\-\oldmarginpar[\raggedleft\footnotesize #1]
{\raggedright\footnotesize #1}}
\begin{document}

\title[Muckenhoupt-Wheeden conjectures in higher dimensions.]{Muckenhoupt-Wheeden conjectures in higher dimensions.}

\author[A. Criado]{Alberto Criado}
\address{Alberto Criado \newline\indent Departamento de Matem\'aticas, Universidad Aut\'onoma de Madrid, 28049 Madrid, Spain}
\email{alberto.criado@uam.es}
\thanks{Authors supported by DGU grant MTM2010-16518.}

\author[F. Soria]{Fernando Soria}
\address{Fernando Soria \newline\indent Departamento de Matem\'aticas and Instituto de Ciencias Matem\'aticas CSIC--UAM--UC3M--UCM, Universidad Aut\'onoma de Madrid, 28049 Madrid, Spain}
\email{fernando.soria@uam.es}

\begin{abstract}

In recent work by Reguera and Thiele \cite{RegueraThiele} and by Reguera and Scurry \cite{RegueraScurry}, two conjectures about joint weighted estimates for  Calder\'on-Zygmund operators and the Hardy-Littlewood maximal function have been refuted in the one-dimensional case. One of the key ingredients for these results is the construction of weights for which the action of the Hilbert transform is substantially bigger than that of the maximal function. In this work, we show that a similar construction is possible for classical Calder\'on-Zygmund operators in higher dimensions. This allows us to fully disprove the conjectures.

\end{abstract}

\subjclass[2010]{42B25}
\keywords{Maximal operator, Calder\'on-Zygmund operators, weighted inequalities}

\maketitle

\section{Introduction and statements of results}

In this paper we will study joint weighted estimates for the Hardy-Littlewood maximal operator and classical Calder\'on-Zygmund operators. We consider the non-centered Hardy-Littlewood maximal operator over cubes, defined for a locally integrable function $f$ as
\[
Mf(x)=\sup_{x\in Q\subset \mathcal Q} \fint_Q |f(y)|\,dy,
\]
where $\mathcal Q$ denotes the family of all cubes with sides parallel to the coordinate axes in $\real^d$. We will also consider classical Calder\'on-Zygmund singular integral operators, whose action on a smooth function $f$ is defined by
\[
Tf(x)=p.v.\int_{\real^d} K(x,y)f(y)\,dy.
\]
Here the kernel $K$ has the form
\begin{equation}\label{nucleo}
K(x,y)=\frac{\Omega(x-y)}{|x-y|^d},
\end{equation}
with $\Omega$ a homogeneous function of degree 0, such that $\Omega\in C^1(\mathbb S^{d-1})$ and $\int_{\mathbb S^{d-1}}\Omega(x)\,d\sigma_{d-1}(x)=0$. The Hilbert transform in one dimension and the Riesz transforms in higher dimensions are examples of such operators. We may also consider more general Carder\'on-Zygmund operators. In fact, our arguments work well for operators with variable kernels $K$ satisfying standard size and regularity conditions. We will not pursue here these generalizations. Instead, we will make some comments on how to extend our results to this more general setting.

\bigskip

In this context, a weight simply means a non-negative function $w:\real^n\rightarrow [0,\infty]$. Such $w$ can be interpreted as the density of an absolutely continuous measure. This measure is usually denoted by the same letter as its density. That is, if $w$ is a weight in $\real^n$, for a measurable $E\subset \real^n$ one writes $w(E)=\int_E w(x)\,dx$ and for $f$ a measurable function we say that $f\in L^p(w)$ if $\|f\|_{L^p(w)}=\big(\int |f|^p w\bigr)^{1/p}<\infty$.

\bigskip

In the 1970's B. Muckenhoupt and Wheeden among other authors began the study of weighted inequalities for maximal, Calder\'on-Zygmund and other operators. They defined the $A_p$ class as the collection of weights $w$ satisfying 
\begin{equation}\label{ap.un.peso}
\sup_{Q\in \mathcal Q} \fint_Q w(y)\,dy \left(\fint_Q w(y)^{-p'/p}\,dy\right)^{p/p'} < \infty,
\end{equation}
if $1<p<\infty$, or 
\begin{equation}\label{a1.un.peso}
Mw(x) \leq C w(x)\ \ \mbox{a.e. } x,
\end{equation}
with $C>0$ independent of $x$, if $p=1$. It is well known that $w\in A_p$ is equivalent to $M$ being bounded on $L^p(w)$, if $p>1$, and to $M$ being weakly bounded on $L^1(w)$, if $p=1$. It is also known that (\ref{ap.un.peso}) and (\ref{a1.un.peso}) are sufficient too for the same kind of estimates of a Calder\'on-Zygmund operator, but only necessary in the sense that if all the $d$ Riesz transforms are weakly bounded on $L^p(w)$, then $w\in A_p(w)$, for $1\leq p<\infty$. In the one dimensional case this means in particular that the Hilbert transform is weakly bounded on $L^p(w)$ if and only if $w\in A_p$, for $1\leq p<\infty$. For a more complete account on these facts see \cite{GarciaCuervaRubiodeFrancia} and \cite{Grafakos}.

\bigskip

The situation is more complicated when one considers norm estimates with two weights. A pair of weights $(u,v)$ is in the $A_p$ class if
\begin{equation}\label{ap.dos.pesos}
\sup_{Q\in \mathcal Q} \fint_Q v(y)\,dy \left(\fint_Q u(y)^{1-p'}\,dy\right)^{1/p'}<\infty,
\end{equation}
for $p>1$, and in $A_1$ if
\begin{equation}\label{a1.dos.pesos}
Mv(x)\leq C u(x),\ \ \mbox{a.e. } x,
\end{equation}
with $C>0$ independent of $x$. These conditions are equivalent to the mapping $M:L^p(u)\rightarrow L^{p,\infty}(v)$ to be bounded, for $1\leq p<\infty$, and necessary for the strong boundedness $M:L^p(u)\rightarrow L^p(v)$, if $p>1$, but not sufficient for it. The continuity of $M$ from $L^p(u)$ to $L^p(v)$ was, nevertheless, characterized by E. Sawyer \cite{Sawyer} to be equivalent to
\[
\int_Q M(\chi_Q v^{1-p'})^p u \leq C \int_Q v^{1-p'}<\infty,
\]
for all $Q\in \mathcal Q$. In the one weight setting some of the norm estimates for Calder\'on-Zygmund operators were shown to be equivalent to the ones for $M$. This suggested that similar connections might be found in the two weight setting. B. Muckenhoupt, R. Wheeden and others proposed several of them. For many years they could not be confirmed or refuted and became known as Muckenhoupt-Wheeden conjectures. 

\bigskip

Perhaps the most famous one originates in a result by C. Fefferman and E.M. Stein \cite{FeffermanStein} showing that there is an absolute constant such that for any weight $w$ one has
\begin{equation}\label{desigualdad.feffermanstein}
w\left(\{x\in\real^d: Mf(x)>\lambda\}\right) \leq \frac C\lambda\int f(x) Mw(x)\,dx.
\end{equation}
It has been attributed to Muckenhoupt and Wheeden the conjecture that the same two weight inequality should be true for a Calder\'on-Zygmund operator.

\bigskip

\begin{conjecture}\label{conj1} For each classical Calder\'on-Zygmund operator $T$,  there exists a constant $C>0$ so that for every weight $w$ one has
\begin{equation}\label{conjetura}
w\left(\{x\in\real^d: |Tf(x)|>\lambda\}\right) \leq \frac C\lambda\int |f(x)| Mw(x)\,dx,
\end{equation}
for all $\lambda>0$ and $f\in L^1(Mw)$.
\end{conjecture}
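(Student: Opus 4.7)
The paper explicitly states that it disproves this conjecture, so the plan is not to prove but to build a counterexample: a weight $w$, a function $f$, and a level $\lambda$ for which the ratio of the two sides of (\ref{conjetura}) can be made arbitrarily large. The starting point is the one-dimensional construction of Reguera--Thiele and Reguera--Scurry, which produces $w_1$ and $f_1$ on $\real$ such that the Hilbert transform of $f_1$ is much larger (in the $w_1$-measure sense) than $f_1\cdot Mw_1$ allows. The goal is to lift this to $\real^d$ for a classical Calder\'on--Zygmund operator $T$ with kernel (\ref{nucleo}).

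First, I would pick a direction, say $e_1$, along which the angular factor $\Omega$ is nonvanishing, so that $T$ acts essentially like a Hilbert transform along that axis. Then I would form a ``slab lift'' $w(x) = w_1(x_1)\chi_R(x')$ and $f(x) = f_1(x_1)\chi_R(x')$, where $x' = (x_2,\ldots,x_d)$ and $R$ is a thin box in the transverse variables of side length $\delta$. Using $\Omega\in C^1(\mathbb{S}^{d-1})$ and a Taylor expansion of $\Omega$ near $e_1$, one expects to show that on a comparable slab $Tf(x_1,x')\gtrsim \delta^{d-1}\,\Omega(e_1)\,Hf_1(x_1)$ up to controlled errors, giving a lower bound on the left-hand side of (\ref{conjetura}) that matches the one-dimensional scenario, modulated by the slab thickness.

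The crux is the upper bound on $\int |f|\,Mw\,dx$. The aim is to prove that $Mw(x) \lesssim \delta^{d-1}\,M_1 w_1(x_1)$ whenever $x$ is near the slab, where $M_1$ denotes the one-dimensional maximal operator. For cubes of side smaller than $\delta$, this reduces to the 1D statement with an extra factor from transverse averaging; for cubes of side $\ell$ larger than $\delta$, the cube only sees a slab of $w$, so the average is diluted by a factor $\delta^{d-1}/\ell^{d-1}$, which compensates for the larger one-dimensional average that a longer interval provides. Combining the two bounds shows that both sides of (\ref{conjetura}) scale by the same factor $\delta^{d-1}$, so the 1D counterexample survives the lift and the conjectured constant $C$ must be unbounded.

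The main obstacle will be carrying out the lower bound on $Tf$ cleanly. In the 1D proof the kernel is a pure principal value $1/(x-y)$, whereas in higher dimensions the kernel $\Omega(x-y)/|x-y|^d$ sees a range of directions even when $f$ sits in a thin slab; the transverse integration can produce cancellations that shrink $Tf$ below the Hilbert-transform-like size one expects. Taming these contributions likely requires either a careful choice of $\Omega$ (e.g.\ a Riesz transform with a definite sign along $e_1$), or a quantitative use of the smoothness of $\Omega$ together with sending $\delta\to 0$ at the end of the argument, so that only the leading order $\Omega(e_1)$ contributes. Once this is in place, extending the disproof from Riesz transforms to general kernels $\Omega$ should be a routine adaptation.
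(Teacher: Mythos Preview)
The slab lift you propose has a concrete error and is not the route the paper takes. Your bound $Mw(x)\lesssim\delta^{d-1}M_1w_1(x_1)$ is false: for any cube $Q$ of side $\ell\le\delta$ contained in the slab one has $\fint_Q w=\fint_I w_1$, where $I$ is the projection onto the first axis, so in fact $Mw(x)\ge M_1w_1(x_1)$ on the slab and there is no $\delta^{d-1}$ gain from ``transverse averaging'' at small scales. Your heuristic $Tf\approx\delta^{d-1}\Omega(e_1)Hf_1$ is equally off: integrating $\Omega(x-y)/|x-y|^d$ over $y'\in R$ produces a one-variable kernel that behaves like $c_{\pm}/|x_1-y_1|$ (with $c_{\pm}=\int_{\real^{d-1}}\Omega(\pm1,u')(1+|u'|^2)^{-d/2}\,du'$) when $|x_1-y_1|\ll\delta$, and like $\delta^{d-1}\Omega(\pm e_1)/|x_1-y_1|^{d}$ when $|x_1-y_1|\gg\delta$. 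Neither regime matches your scaling, the Reguera--Thiele weight lives on many scales at once so no single $\delta$ puts them all in one regime, and for a general $\Omega$ there is no reason for $c_\pm$ or $\Omega(\pm e_1)$ to be nonzero with opposite signs --- the ``routine adaptation'' from Riesz transforms to arbitrary kernels is precisely the difficulty.

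The paper instead rebuilds the Reguera--Thiele weight \emph{directly in $\real^d$} (Proposition~\ref{prop.pesos}). At each triadic step the descendant cubes $\mathcal T_{k+1}(Q)\subset\widehat Q$ are placed with their centers in a cone $v+U$ on which $\Omega$ has a definite sign, and the reference cube $J(Q)$ is attached at the vertex $v$ of $\widehat Q$; this forces the discrete piece $\sum_{L\in\mathcal T_{k+1}(Q)}K(v,c_L)\,w_N(L)$ to accumulate a factor $\gtrsim N$ with the same sign as the uncontrolled outer term, while a separate cube-counting argument yields the local $A_1$ bound $Mw_N\lesssim w_N$ on $\supp w_N$. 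The actual disproof of (\ref{conjetura}) then proceeds from $w_N$ by duality, taking $f=w_N\,T^\ast w_N/(Mw_N)^2$ together with a rearrangement estimate and the auxiliary bound $\int\bigl(M(\chi_E v)/Mv\bigr)^2 v\lesssim v(E)$, rather than by lifting a preexisting one-dimensional $f$.
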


\bigskip

The question was extended to more general operators and the conjecture was shown to be true for some square functions in \cite{ChanilloWheeden}, but false for fractional integral operators in \cite{CarroPerezSoriaSoria}. The closest approach, on the positive side, for Calder\'on-Zygmund operators is due to C. P\'erez, who showed in \cite{Perez1} that (\ref{conjetura}) is true if $M$ is replaced by the iterated operator $M^2$ or even by the operator $M_{L(\log L)^\eps}$, with $\eps>0$. Later, C. P\'erez and D. Cruz-Uribe \cite{CUCP2}, used the extrapolation technique to show that if (\ref{conjetura}) holds for a sublinear operator $T$, then one has
\begin{equation}\label{cond.perez.cu}
\int |Tf(x)|^p w(x)\,dx \leq C \int |f(x)|^p\left(\frac{Mw(x)}{w(x)}\right)^p w(x)\,dx,
\end{equation}
for all $p>1$. This necessary condition was disproved by M.C. Reguera and C. Thiele in \cite{RegueraThiele} in the case $p=2$, thus showing the conjecture to be false. They gave a counterexample in the one-dimensional case, that is, when $T$ is the Hilbert transform. The construction was based on a simplification of the technique used by M.C. Reguera in \cite{Reguera} in order to refute the corresponding assertion in the dyadic setting. 

\bigskip

Our first result shows that Conjecture \ref{conj1} is false for all classical Calder\'on-Zygmund operators.

\bigskip

\begin{teo}\label{no.conj1} Let $T$ be a Calder\'on-Zygmund operator with an associated kernel satisfying (\ref{nucleo}). Then, $\forall N>0$, $\exists w$ weight, $\exists f\in L^1(Mw)$ and $\exists \lambda>0$ so that
\begin{equation}\label{desigualdad.teorema}
w(\{|Tf|>\lambda\})\geq \frac N\lambda\int |f| Mw.
\end{equation}
\end{teo}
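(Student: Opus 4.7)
The plan is to transfer the one-dimensional Reguera--Thiele counterexample for the Hilbert transform to higher dimensions by embedding the data inside a long, thin slab around a well-chosen direction. From \cite{RegueraThiele} one has, for each $N>0$, a triple $(w^{(1)},f^{(1)},\lambda^{(1)})$ on $\real$ supported in some interval $[-L,L]$ with
\[
w^{(1)}\bigl(\{|Hf^{(1)}|>\lambda^{(1)}\}\bigr) \ge \frac{N}{\lambda^{(1)}}\int |f^{(1)}|\,M_1 w^{(1)}.
\]
After rotating coordinates, one assumes that the number
\[
c \;=\; \int_{\{\theta\cdot e_1>0\}} \Omega(\theta)\,d\sigma(\theta)
\]
is nonzero; this is possible whenever the odd part of $\Omega$ is nontrivial, and the hypothesis $\int_{\mathbb{S}^{d-1}}\Omega\,d\sigma=0$ is precisely what makes $c$ encode the relevant odd structure of the operator across $\{\theta_1=0\}$. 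Pick a scale $\delta\gg L$ and set
\[
w(x)=w^{(1)}(x_1)\,\chi_{\{|x'|_\infty<\delta/2\}}(x'),\qquad f(x)=f^{(1)}(x_1)\,\chi_{\{|x'|_\infty<\delta/2\}}(x').
\]

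The two main estimates to verify are, on the one hand, $\int f\,Mw\le C_d\,\delta^{d-1}\int |f^{(1)}|\,M_1 w^{(1)}$, which follows from a direct inspection of the non-centered maximal function of a product weight (cubes of side less than $\delta$ dominate when they fit inside the slab, and give a matching lower bound on the central portion), and on the other hand the kernel approximation
\[
Tf(x_1,x')\;=\;c\,Hf^{(1)}(x_1)+O(1/\delta),\qquad |x'|_\infty<\delta/4,
\]
interpreted as an identity of one-dimensional principal value convolutions. For the second, write $Tf(x_1,x')=p.v.\int \tilde K(x_1-y_1,x')f^{(1)}(y_1)\,dy_1$ with
\[
\tilde K(r,x')=\int_{|z'-x'|_\infty<\delta/2}\!\frac{\Omega((r,z')/|(r,z')|)}{|(r,z')|^d}\,dz',
\]
make the substitution $z'=|r|u$, and recognize the resulting integrand (up to a factor $1/r$) as the pushforward of $\Omega$ under the gnomonic projection of the upper hemisphere onto $\{\theta_1=1\}$; the identity $d\sigma=(1+|u|^2)^{-d/2}du$ for that projection makes the leading term at $r\to 0$ equal to $c/r$. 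The $C^1$ regularity of $\Omega$ then yields a tail bound $|\tilde K(r,x')-c/r|\le C/\delta$ uniform in $|r|\le 2L$ and $|x'|_\infty<\delta/4$, which is harmless once $\delta$ is chosen large. Combining with the 1D inequality applied at level $\lambda^{(1)}=\lambda/|c|$ gives $w(\{|Tf|>\lambda\})\ge (C\,N/\lambda)\int |f|\,Mw$, and letting $N\to\infty$ disproves the conjecture.

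I expect the main technical difficulty to lie in the kernel estimate for $\tilde K$: the principal-value singularity at $r=0$, which is only finite because of the mean-zero hypothesis, must be balanced carefully against the far-field contribution from $|z'|\sim \delta$, and this balance depends crucially on the separation of scales $L\ll\delta$ and on the $C^1$ control of $\Omega$ on the sphere. A secondary obstacle is the case when $\Omega$ is purely even, in which case $c$ vanishes in every direction and $\tilde K$ becomes uniformly $O(1/\delta)$, defeating the above argument; treating this case requires either a codimension-two slab reduction to a two-dimensional Calder\'on--Zygmund operator with nontrivial odd structure on the plane, or a tailored modification of $f$ that breaks the antipodal symmetry within the cross-section.
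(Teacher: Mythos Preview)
Your slab-reduction strategy is genuinely different from the paper's. The paper does not lift the one-dimensional counterexample at all; instead it constructs the weight directly in $\mathbb{R}^d$ (Proposition~\ref{prop.pesos}), via a triadic recursion that yields $w_N$ supported on a sparse set $D_N\subset[0,1]^d$ with the local-$A_1$ property $Mw_N\le Cw_N$ on $D_N$ and with $|Tw_N|\ge CN\,w_N$ on a subset $\widehat{D_N}\subset D_N$ carrying most of the mass. The lower bound on $|Tw_N|$ is obtained by placing the ``children'' cubes at each stage inside a cone on which $\Omega$ has a definite sign---and such a cone exists for \emph{any} nonzero continuous $\Omega$ of zero mean, odd or even. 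The theorem then follows from a short duality argument with $f=w_N\,T^\ast w_N/(Mw_N)^2$ together with a lemma that $g\mapsto M(gv)/Mv$ is bounded on $L^p(v)$ uniformly in $v$. When $\Omega$ has a nontrivial odd part, your route is more economical, since it quotes \cite{RegueraThiele} as a black box and the kernel computation you outline (the gnomonic-projection identity giving the leading term $c/r$ with a tail of size $O(1/\delta)$) is straightforward.

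The case of purely even $\Omega$ is, however, a genuine gap rather than a secondary obstacle, and neither of the fixes you sketch closes it. The codimension-two reduction cannot produce ``nontrivial odd structure'': if $\Omega(-\theta)=\Omega(\theta)$ on $\mathbb S^{d-1}$, then integrating out any centered $(d-2)$-dimensional slab yields a two-dimensional kernel that is still even, so the same vanishing of $c$ recurs; and in the base case $d=2$ there is no codimension-two reduction available at all. The alternative suggestion---modifying $f$ within the cross-section to break antipodal symmetry---is not developed enough to assess, and there is no evident mechanism by which such a modification would resurrect a $1/r$-type singularity once the leading term has cancelled. Since the theorem as stated covers, for instance, $\Omega(\theta)=\theta_1^2-\theta_2^2$ on $\mathbb S^1$ (and more generally any even, mean-zero, nonzero $\Omega$ in $\mathbb R^d$, $d\ge 2$), your argument does not prove it in full generality, and some intrinsically $d$-dimensional construction along the lines of Proposition~\ref{prop.pesos} appears unavoidable for those operators.
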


\bigskip

D. Cruz-Uribe, C. P\'erez and J.M. Martell in \cite{CUMP3} considered another conjecture relating two weight estimates for the maximal operator and Calder\'on-Zygmund operators. This conjecture is also attributed to Muckenhoupt and Wheeden and its precise statement is the following.

\bigskip

\begin{conjecture}\label{conj2}
Let $T$ be a Calder\'on-Zygmund operator as above, then
\[
\left.\begin{array}{c} M:L^p(u)\rightarrow L^p(v)\\ M:L^{p'}(v^{1-p'})\rightarrow L^{p'}(u^{1-p'})\end{array}\right\}\quad \Longrightarrow \quad T:L^p(u)\rightarrow L^p(v).
\]
\end{conjecture}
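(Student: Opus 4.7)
The plan is to attempt to derive the two-weight inequality $T:L^p(u)\to L^p(v)$ from the two maximal-function hypotheses by combining Sawyer-type testing characterizations with sparse (or Calder\'on-Zygmund) domination of singular integrals. First, by Sawyer's theorem, the hypothesis $M:L^p(u)\to L^p(v)$ is equivalent to the testing condition
\begin{equation*}
\int_Q M(\sigma \chi_Q)^p\, v\, dx \leq C\, \sigma(Q), \qquad \sigma = u^{1-p'},
\end{equation*}
uniformly in $Q\in\mathcal Q$, and the dual hypothesis $M:L^{p'}(v^{1-p'})\to L^{p'}(u^{1-p'})$ yields the symmetric condition with $u$ and $v$ exchanged and $p$ replaced by $p'$. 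The target is to produce an analogous pair of testing conditions for $T$ itself, whose sufficiency for $T:L^p(u)\to L^p(v)$ can be supplied by $T1$-type theorems for Calder\'on-Zygmund operators.

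The natural route exploits a pointwise domination of the form $|Tf(x)|\leq C\sum_{Q\in\mathcal S}\langle |f|\rangle_Q \chi_Q(x)$ for a sparse family $\mathcal S=\mathcal S(f)$ chosen via an appropriate dyadic decomposition adapted to $f$. This reduces the problem to bounding positive sparse operators in the two-weight setting, which in turn is governed by testing conditions of the form
\begin{equation*}
\int_Q \Bigl(\sum_{R\in\mathcal S,\ R\subset Q}\langle\sigma\rangle_R\, \chi_R\Bigr)^{\!p} v\, dx \leq C\, \sigma(Q)
\end{equation*}
and its dual, uniformly in $Q$. The intention is to derive these from the hypotheses on $M$ via the pointwise comparison $\langle\sigma\rangle_R\chi_R \leq M\sigma\cdot\chi_R$, the sparseness of $\mathcal S$, and a John-Nirenberg or Carleson-type summation lemma.

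The step that I expect to be the real difficulty lies precisely in this last passage. Summing $\langle\sigma\rangle_R\chi_R \leq M\sigma\cdot\chi_R$ over a sparse family produces an iterated-maximal-type expression rather than a single $M$, and no inequality of the form $\sum_{R\in\mathcal S}\langle f\rangle_R \chi_R \lesssim Mf$ holds in general. The known positive partial results of P\'erez and of Cruz-Uribe--P\'erez toward Conjecture \ref{conj1} circumvent this by replacing $M$ with the strictly stronger operator $M_{L(\log L)^\eps}$, which does dominate sparse averages; but in the pure-$M$ setting of Conjecture \ref{conj2} no such substitution is available, and any successful proof must find a genuinely sharper bridge from $M$-testing to sparse-testing. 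Given the disproof of Conjecture \ref{conj1} recorded in Theorem \ref{no.conj1}, it is far from clear that such a bridge exists in general, so the plausible expectation is either that the proof above must incorporate additional structural information from the hypothesis pair $(u,v)$ beyond the raw testing conditions, or that Conjecture \ref{conj2} itself must be refuted by a counterexample in the spirit of Theorem \ref{no.conj1}.
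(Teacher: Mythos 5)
There is a fundamental problem here: the statement you are trying to prove is false, and the paper's treatment of it is a refutation, not a proof. Conjecture \ref{conj2} is precisely what Theorem \ref{no.conj2} disproves, for every classical Calder\'on--Zygmund operator and every $1<p<\infty$ (following Reguera--Scurry, who had already refuted it for the Hilbert transform). So no completion of your sparse-domination strategy can work. Your own closing diagnosis is in fact the right one: the passage from the $M$-testing conditions to the sparse testing conditions for $T$ is exactly where the implication breaks, and the paper shows this is not merely a technical obstruction but a genuine failure. The mechanism is the weights $w_N$ of Proposition \ref{prop.pesos}: supported on a sparse triadic Cantor-type set $D_N\subset[0,1]^d$, they satisfy the local $A_1$ property $Mw_N\leq C w_N$ on $D_N$ (so the maximal function gains nothing over the weight on its support), and yet the lacunary geometric arrangement of the Cantor pieces, chosen so that the ``discrete'' part of $Tw_N$ has a fixed sign along a cone adapted to the kernel $\Omega$, forces $|Tw_N|\geq cN\,w_N$ on a set $\widehat{D_N}$ carrying a fixed proportion of the mass. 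This is exactly the phenomenon your hoped-for bridge would have to exclude, and it cannot.

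Concretely, the paper refutes the conjecture by the ``hump gliding'' construction: $w=\sum_{N\ge N_0} w_N(\cdot-3^Nz)$, $v=w$, $u=(Mw/w)^p w$, so that $u\sim w$ on $\supp w$. The hypothesis $M:L^p(u)\to L^p(v)$ follows from the Fefferman--Stein inequality (\ref{desigualdad.feffermanstein}) together with $Mw\sim w$ on $\supp w$; the dual hypothesis $M:L^{p'}(v^{1-p'})\to L^{p'}(u^{1-p'})$ follows from the pointwise bound $\Mtilde f\leq \Mtilde w\,\Mtilde_w(fw^{-1})$ and the Besicovitch-based boundedness of $\Mtilde_w$ on $L^{p'}(w)$. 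But for $g=\sum N^{-\eps}\chi_{Q_N}$ with $1/p<\eps<1$, the dual formulation (\ref{tres}) fails: $\|T(gw)\|_{L^{p'}(u^{1-p'})}=\infty$, because on each translated copy the main term contributes $N^{p'(1-\eps)}$ by the lower bound $|Tw_N|\geq cNw_N$, while the interaction between different humps is summable thanks to the $3^N$ separation. So rather than seeking additional structural input to make your bridge work, the correct resolution of the statement is the counterexample; if you wish to salvage a positive result along your route you must strengthen the hypothesis on $M$ (e.g.\ to $M_{L(\log L)^\eps}$-type control, as in the P\'erez and Cruz-Uribe--P\'erez results you cite), which is exactly what the still-open bump conjecture formalizes.
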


\bigskip

\begin{xrem}
To simplify the notation throughout this work, the symbol \lq 
$S:X\rightarrow Y$' will always mean that the operator $S$ maps the elements of the space $X$ into elements of $Y$ in a continuous way. This notation has been already used  in the statement of the above conjecture.
\end{xrem}

\bigskip

The motivation for the second condition on $M$ is the following. A simple duality argument shows that since $T$ is an essentially self-adjoint operator, $T:L^p(u)\rightarrow L^p(v)$ is equivalent to $T:L^{p'}(v^{1-p'})\rightarrow L^{p'}(u^{1-p'})$. 

\bigskip

This conjecture was refuted by M.C. Reguera and J. Scurry in \cite{RegueraScurry} for the Hilbert transform. Their counterexample is based on the one that disproved Conjecture \ref{conj1} in \cite{RegueraThiele}. We show that the conjecture is false again for every classical Calder\'on-Zygmund operator.

\bigskip

\begin{teo}\label{no.conj2}
Fix $1<p<\infty$, and let $T$ be a Calder\'on-Zygmund operator as in Theorem \ref{no.conj1}. Then one can construct weights $u$ and $v$ such that $M:L^p(u)\rightarrow L^p(v)$ and $M:L^{p'}(u^{1-p'})\rightarrow L^{p'}(v^{1-p'})$ but there exists an $f\in L^p(u)$ such that $\|Tf\|_{L^p(v)}=\infty$.
\end{teo}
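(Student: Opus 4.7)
Our plan is to mimic, in higher dimensions, the extrapolation-plus-superposition argument of Reguera and Scurry \cite{RegueraScurry}, using the weight construction underlying Theorem \ref{no.conj1} as the local building block in place of the Reguera--Thiele weights for the Hilbert transform.

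\textbf{Local step.} Theorem \ref{no.conj1} supplies, for every positive integer $N$, a weight $w_N$, a function $f_N$ and a level $\lambda_N$ such that
\[
w_N\bigl(\{|Tf_N|>\lambda_N\}\bigr)\ \geq\ \frac{N}{\lambda_N}\int f_N\, Mw_N,
\]
localized, after translation and scaling, in a cube $Q_N$. Following the template of \cite{RegueraScurry}, we attach to $w_N$ a pair of local weights $(u_N,v_N)$ in which $v_N$ essentially agrees with $w_N$ and $u_N$ is a mild majorant of $w_N$ built from iterated maximal averages (for instance an $M^2$ or $M_{L(\log L)^\eps}$ modification, reflecting the positive result of \cite{Perez1}). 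The structure of the weights $w_N$ is then exploited to check that both
\[
M:L^p(u_N)\rightarrow L^p(v_N)\qquad\text{and}\qquad M:L^{p'}(u_N^{1-p'})\rightarrow L^{p'}(v_N^{1-p'})
\]
hold with constants independent of $N$, while a Chebyshev/Kolmogorov argument applied to the displayed weak-type failure produces a test function $g_N$ with $\|Tg_N\|_{L^p(v_N)}\gtrsim N^{\beta}\,\|g_N\|_{L^p(u_N)}$ for some $\beta>0$.

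\textbf{Global step.} Pick $N_k\rightarrow\infty$ and pairwise disjoint cubes $Q_{N_k}$ placed far apart and of geometrically decreasing sizes, and form the superpositions $u=\sum_k u_{N_k}$, $v=\sum_k v_{N_k}$ and $f=\sum_k c_k\,g_{N_k}$ with coefficients $c_k$ chosen so that $f\in L^p(u)$ while the accumulating factors $N_k^{\beta}$ force $\|Tf\|_{L^p(v)}=\infty$. The wide geometric separation of the cubes ensures that $M$, acting on any of these sums, essentially sees only the piece supported near the point in question, so that the uniform local bounds of the previous step lift to global two-weight inequalities for $M$.

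\textbf{Main obstacle.} The crux is the construction of the local pair $(u_N,v_N)$ with the uniform two-sided maximal bounds. The first inequality could be forced by enlarging $u_N$, but that weakens the strong-type failure for $T$; the second, being a testing condition on the dual side, simultaneously constrains $u_N$ to not be too large. Reconciling both requirements is where the detailed structure of the weights $w_N$ produced by Theorem \ref{no.conj1}---their $A_\infty$-type behaviour and the geometry of the level sets of $|Tf_N|$---must be used. Once this local step is in place, the global superposition is a relatively soft direct-sum/Borel--Cantelli argument standard in this area.
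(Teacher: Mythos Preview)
Your plan has a genuine gap, and it stems from using the wrong building block. You start from the \emph{output} of Theorem~\ref{no.conj1} --- a weak-type failure $(w_N,f_N,\lambda_N)$ treated as a black box --- and then try to manufacture a local pair $(u_N,v_N)$ with $u_N$ a ``mild majorant'' of $w_N$ such as $M^2w_N$. This creates exactly the tension you flag as the ``main obstacle'': enlarging $u_N$ to force the first maximal bound kills the blow-up for $T$, while the dual maximal bound prevents $u_N$ from being too large. But this obstacle is self-inflicted. The paper does not use Theorem~\ref{no.conj1} at all here; it goes back to the underlying Proposition~\ref{prop.pesos}, whose weights $w_N$ carry two crucial pointwise properties you never invoke: the local $A_1$ condition $Mw_N\le C\,w_N$ on $\supp w_N$, and the pointwise lower bound $|Tw_N|\ge CN\,w_N$ on a set $\widehat{D_N}$ of comparable $w_N$-mass.

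With these in hand the construction is far simpler than you suggest. Set $w=\sum_{N\ge N_0} w_N(\cdot-3^Nz)$, take $v=w$ and $u=(Mw/w)^p\,w$. The local $A_1$ property propagates to $w$, giving $Mw\sim w$ on $\supp w$, so in fact $u\sim v\sim w$ on the common support --- the two-weight problem collapses to a single weight, and both maximal bounds follow from standard one-weight arguments (Fefferman--Stein for one direction, a Besicovitch argument for $M_w$ in the other). No iterated maximal majorant, no Kolmogorov step, and no delicate balancing are needed. The unboundedness of $T$ is then read off directly from $|Tw_N|\ge CN\,w_N$ via the same computation as in Theorem~\ref{no.conj3} (with $p$ and $p'$ swapped, using duality). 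Your proposed $u_N\approx M^2w_N$ would have full support, destroying the mechanism that makes the dual maximal bound automatic.
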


\bigskip

One important observation is that while an $A_p$ weight is a.e. positive, the previous results have no assumptions on the support of the weight. In order for the questions we are treating to make sense, for $w$ a weight vanishing in some set of positive Lebesgue measure, we define $L^p(w)$ as the space of the measurable functions $f$ so that $\supp f\subset \supp w$ and $\|f\|_{L^p(w)}<\infty$\footnote {In a similar fashion, the expression $w^\alpha(x)$ for negative $\alpha$ is set to be zero at the points $x$ where $w(x)=0$.}. Indeed, one of the key ingredients in the proofs in \cite{RegueraThiele} and \cite{RegueraScurry} is to consider weights with sparse support. In \cite{RegueraScurry} it is shown that  in the one-dimensional setting these weights do not preserve the equivalence of the boundedness of $M$ and $H$ on weighted $L^p$. We will extend this, showing that unlike for a.e. positive weights, in this setting the boundedness of $M$ on $L^p(w)$ does not imply the same result for  Calder\'on-Zygmund operators.

\bigskip

\begin{teo}\label{no.conj3} Let $T$ be a Calder\'on-Zygmund operator. Then there exist a weight $u$ and a function $f\in L^p(u)$ such that $M$ is bounded on $L^p(u)$ but $\|Tf\|_{L^p(u)}=\infty$.
\end{teo}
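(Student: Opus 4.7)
An a.e.\ positive weight $u$ with $M:L^p(u)\to L^p(u)$ must belong to $A_p$, whence $T$ is automatically bounded on $L^p(u)$ by classical Coifman--Fefferman theory. So any counterexample must use a weight $u$ whose support $E$ omits a set of positive Lebesgue measure. The idea is to take $E$ geometrically so sparse that $M$, which only averages absolute values, is controllable on $L^p(u)$, while $T$, through its signed smooth kernel (\ref{nucleo}), produces constructive long-range interactions that force $\|Tf\|_{L^p(u)}=\infty$.

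\textbf{Construction.} I would let $E=\bigsqcup_k Q_k$ be a disjoint union of cubes with rapidly decreasing sidelengths and geometrically increasing mutual separations, and set $u=\sum_k\alpha_k\chi_{Q_k}$ for positive coefficients $\alpha_k$ to be fixed. For $g\in L^p(u)$, splitting the supremum defining $M$ into cubes contained in a single $Q_k$ and cubes reaching several pieces of $E$ gives, for $x\in Q_k$,
\[
Mg(x)\;\le\; M(g\chi_{Q_k})(x)\;+\;C\sum_{j\neq k}\mathrm{dist}(Q_k,Q_j)^{-d}\int_{Q_j}|g|.
\]
The local term is handled cube by cube by the classical $L^p$-boundedness of $M$; the tail is bounded by a Schur test on the matrix $\{\mathrm{dist}(Q_k,Q_j)^{-d}|Q_j|\}_{k,j}$, convergent thanks to the chosen sparsity, yielding $M:L^p(u)\to L^p(u)$. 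For the failure of $T$, I would choose $f=\sum_k c_k\chi_{Q_k}$ with signs $c_k$ aligned so that the dominant contributions to $Tf$ on each $Q_j$ from remote cubes add constructively. Since $\Omega\in C^1(\mathbb S^{d-1})$ and the $Q_k$ are well separated, one has
\[
K(x,y)\;\approx\;\frac{\Omega\bigl((y_k-y_j)/|y_k-y_j|\bigr)}{|y_k-y_j|^d},\qquad x\in Q_j,\ y\in Q_k,
\]
so a correct choice of signs produces $|Tf(x)|\gtrsim\sum_{k\neq j}|c_k|\,|Q_k|/|y_k-y_j|^d$ for $x\in Q_j$. Tuning $(\alpha_k,c_k,|Q_k|)$ so that $f\in L^p(u)$ but this sum diverges in $L^p(u)$ completes the argument, in the style of the higher-dimensional construction underlying Theorem \ref{no.conj1}.

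\textbf{Main obstacle.} The delicate point is reconciling the two requirements on the geometry of $E$: the separations of the $Q_k$ must be wide enough for the Schur bound controlling $M$ to close, yet they must produce an oriented, cancellation-free signed sum for $T$ that accumulates to $+\infty$ in $L^p(u)$. Securing a valid sign sequence $(c_k)$ relies on the non-triviality of $\Omega$ along the directions joining the $Q_k$, and effectively re-uses the kernel-regularity arguments developed for Theorem \ref{no.conj1}.
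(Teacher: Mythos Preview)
Your approach has a genuine structural gap. You bound the far-away part of $Mg$ on $Q_j$ by the discrete tail
\[
\mathcal S_j(g)\ :=\ C\sum_{k\neq j}\mathrm{dist}(Q_j,Q_k)^{-d}\int_{Q_k}|g|,
\]
and you propose to control $\|Mg\|_{L^p(u)}$ by a Schur test on the matrix $\{d_{jk}^{-d}|Q_k|\}$. But the very same expression is your claimed \emph{lower} bound for $|Tf|$ on $Q_j$: with $f=\sum_k c_k\chi_{Q_k}$ and well-separated cubes one has, up to constants,
\[
\big|Tf(x)-c_jT\chi_{Q_j}(x)\big|\ \le\ \mathcal S_j(|f|)\qquad\text{and}\qquad |Tf(x)|\ \gtrsim\ \mathcal S_j(|f|)\ -\ |c_j|\,|T\chi_{Q_j}(x)|\quad (x\in Q_j).
\]
If your Schur test converges (so that $g\mapsto(\mathcal S_j(g))_j$ is bounded on $L^p(u)$), then applying it to $g=|f|$ gives $\sum_j\alpha_j|Q_j|\,\mathcal S_j(|f|)^p<\infty$; together with the trivial bound $\|c_jT\chi_{Q_j}\|_{L^p(\alpha_j\,dx)}^p\lesssim |c_j|^p\alpha_j|Q_j|$ and summation, this forces $\|Tf\|_{L^p(u)}<\infty$. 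In short, the sufficient condition you invoke for $M$ is simultaneously a sufficient condition for the boundedness of the off-diagonal part of $T$, and the diagonal part of $T$ is harmless. There is no room left to make $\|Tf\|_{L^p(u)}=\infty$.

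The paper avoids this trap by a completely different mechanism. The building blocks are not flat indicator weights on single cubes but the fractal weights $w_N$ of Proposition~\ref{prop.pesos}, each supported in a unit cube and satisfying $Mw_N\le Cw_N$ on its support together with $|Tw_N|\ge cN\,w_N$ on a subset $\widehat{D_N}$ with $w_N(\widehat{D_N})\sim 1$. One then sets $w=\sum_{N\ge N_0}w_N(\cdot-3^Nz)$, $u=w^{1-p}$, and $f=gw$ with $g=\sum N^{-\eps}\chi_{Q_N}$. The cross terms between different translates are \emph{finite} (they are controlled exactly by a tail sum of the type you wrote), while the divergence of $\|Tf\|_{L^p(u)}$ comes from the \emph{local} pieces: on each $Q_N$ one gains the factor $N^p$ from $|Tw_N|\ge cNw_N$, and $\sum_N N^{p(1-\eps)}=\infty$. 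Thus the crucial ingredient you are missing is the internal multi-scale structure of each piece that manufactures the growing factor $N$; without it, long-range interactions alone cannot separate $T$ from $M$ in this setting.
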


\bigskip

Although our work does not make any contribution to them, for completeness we briefly comment still other important Muckenhoupt-Wheeden conjectures. Conjecture \ref{conj2} had a weak version asserting that $M:L^{p'}(u^{1-p'})\rightarrow L^{p'}(v^{1-p'})$ implies $T:L^p(u)\rightarrow L^{p,\infty}(v)$, for $T$ a Calder\'on-Zygmund operator. This has been shown to be false for the Hilbert transform by D. Cruz-Uribe, A. Reznikov and A. Volberg in \cite{CURV}. By duality, Conjecture \ref{conj1} implied this last conjecture. Thus, the argument in \cite{CURV} also refutes the one-dimensional case of Conjecture \ref{conj1} in an indirect way.

\bigskip

At last, we mention a still open conjecture. It asserts that replacing the $L^p$ or $L^{1-p'}$ integrability requirement in (\ref{ap.dos.pesos}) by a slightly stronger one in the sense of Orlicz integrals will be enough to guarantee the $L^p$ boundedness of Calder\'on-Zygmund operators. This is known as the bump conjecture and only partial results have been obtained so far. For more details see \cite{CUMP1, CUMP2, CUMP3, CUCP1, CUCP2, CUCP3, CUCP4, CURV, Lerner, NazarovReznikovVolberg, Perez0, Perez1, Perez3, PerezWheeden, TreilVolbergZheng}. 

\bigskip

The rest of the paper is organized as follows. In Section 2 we prove Theorems \ref{no.conj1}, \ref{no.conj2} and \ref{no.conj3} assuming the existence of some weights satisfying certain specific properties. Section 3 is devoted to the construction of these weights. As usual, $C$ and $c$ will denote positive constants, that may have different values at different occurrences. Also, given two quantities $A,B>0$, by $A\sim B$ we mean that there exist a constant $C>0$, which may depend on the dimension but is independent otherwise of the main parameters involved, such that $A\leq CB$ and $B\leq CA$.

\bigskip

\section{Proofs of the Theorems.}

\bigskip

The proofs of the three Theorems stated in the previous section are based on the construction of weights satisfying a local $A_1$ property but allowing large values under the action of a given Calder\'on-Zygmund operator.

\bigskip

\begin{prop}\label{prop.pesos} Let $T$ be a Calder\'on-Zygmund operator with an associated kernel satisfying (\ref{nucleo}). Then, for each sufficiently large $N\in \mathbb N$, there exists a weight $w_N$ so that if we denote $D_N:=\supp w_N\subset [0,1]^d$ we have both, $w_N\geq 1$ and  $Mw_N\leq C w_N$ on $D_N$ and $|T w_N|\geq CN w_N$ on $\widehat{D_N} \subset D_N$, with $|\widehat{D_N}|\sim |D_N|$ and $w_N(\widehat{D_N})\sim w_N(D_N)=1$.
\end{prop}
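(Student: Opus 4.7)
The plan is to extend the Reguera--Thiele one--dimensional cascade \cite{RegueraThiele} to $\real^d$, with the positive half--line that is privileged by the Hilbert transform replaced by a directional cone adapted to the kernel. Since $\Omega\in C^1(\mathbb{S}^{d-1})$ has mean zero and is not identically zero, one can select a direction $\xi\in\mathbb{S}^{d-1}$ and constants $\delta,c_0>0$ for which $\Omega$ has constant sign and $|\Omega|\geq c_0$ on the spherical cap $B(\xi,\delta)$. On the open cone $\Gamma_\xi:=\{y\neq 0:y/|y|\in B(\xi,\delta)\}$ the kernel $K(x,y)$ then has constant sign with $|K(x,y)|\geq c_0|x-y|^{-d}$, and this cone will play throughout the role of the positive half--line in the one--dimensional argument.

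I would first fix a small ratio $\rho\in(0,\delta)$ and inductively build a decreasing family $[0,1]^d=F_0\supset F_1\supset\cdots\supset F_N$, where $F_k$ is a disjoint union of axis--parallel cubes of side $\rho^k$ obtained by placing inside every cube of $F_{k-1}$ a prescribed collection of children clustered in its corner opposite to $\xi$. The key geometric requirement, enforced at each step, is that every point $x\in F_{k+1}$ sees the annulus $F_k\setminus F_{k+1}$ inside its own cone $x+\Gamma_\xi$; the choice $\rho<\delta$ makes this consistent at all scales. Setting $D_N:=F_0$, the weight will take the form
\begin{equation*}
w_N \;=\; c_N\sum_{k=0}^N a_k\,\chi_{F_k},
\end{equation*}
with coefficients $a_k$ calibrated so that the generational contributions $a_k|F_k\setminus F_{k+1}|$ are balanced in $k$, and $c_N$ a normalization constant chosen so that $w_N\geq 1$ on $D_N$ and $w_N(D_N)=1$.

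The local $A_1$ bound on $D_N$ should follow from a direct computation: for $x\in F_k\setminus F_{k+1}$ the value $w_N(x)=c_N\sum_{j\leq k}a_j$ and the average $\fint_Q w_N$ over any cube $Q\ni x$ are of the same order, since the latter is dominated by the generation whose scale matches $|Q|^{1/d}$ and all generational contributions are balanced by construction. To derive the lower bound on $|Tw_N|$, I would take $\widehat{D_N}$ to be the set of points with a well--oriented ancestor tower and decompose
\begin{equation*}
Tw_N(x) \;=\; c_N\sum_{k=0}^{N-1}\int_{F_k\setminus F_{k+1}} K(x,y)\,w_N(y)\,dy \;+\; \mathcal R(x).
\end{equation*}
By the cone property each integral has a definite sign, and the calibration of the $a_k$ should make every term comparable to $w_N(x)$; summing over $k$ then gives $|Tw_N(x)|\gtrsim c_0 N\,w_N(x)$ on $\widehat{D_N}$. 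The comparisons $|\widehat{D_N}|\sim |D_N|$ and $w_N(\widehat{D_N})\sim w_N(D_N)$ should hold because at every scale the ``badly oriented'' points make up only an $O(\rho)$ fraction of each parent cube, so a small $\rho$ leaves a set of comparable measure and mass.

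The main obstacle will be the simultaneous enforcement, across all $N$ generations, of the cone property together with the density balance required for local $A_1$, and the control of the remainder $\mathcal R(x)$, which collects the kernel integrals over the pieces of the cascade lying outside the ancestor tower of $x$. The $C^1$ regularity of $\Omega$ should be used here to absorb the boundary terms produced at each scale, and one will finally have to choose $\rho$ small enough in terms of $\delta$, $N$ and the dimension, which is also where the assumption that $N$ be sufficiently large enters.
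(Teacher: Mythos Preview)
Your plan has a structural gap: the mechanism you propose for producing the factor $N$---summing $N$ generational contributions, each comparable to $w_N(x)$---is incompatible with the local $A_1$ constraint. Balancing the generations so that $a_k|F_k\setminus F_{k+1}|$ is constant (which is essentially what local $A_1$ forces) makes the $a_k$ grow geometrically like $(m\rho^d)^{-k}$, where $m$ is the number of children per parent. For $x$ deep in the cascade the ancestor annulus at level $k$ carries weight $\sim c_N a_k$ and lies at distances between $m^{1/d}\rho^{k+1}$ and $\rho^k$ from $x$, so its contribution to $Tw_N(x)$ is of order $c_N a_k\log\bigl((m^{1/d}\rho)^{-1}\bigr)$, \emph{not} of order $w_N(x)\sim c_N a_N$. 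The sum over $k$ is then a geometric series and yields only $O(w_N(x))$, with constant independent of $N$; no choice of $\rho,m$ repairs this, since pushing $m\rho^d\to 1$ flattens the $a_k$ but kills the logarithm, while making $m\rho^d$ small restores the logarithm but makes the terms decay geometrically. In the paper the factor $N$ has an entirely different origin: the weight sits on tiny cubes $J(Q)$, each carrying the \emph{same} value $\alpha_k$ as the $w_N$-density of the $\sim 3^{(N-1)d}$ children of $Q$, and the harmonic sum over those children---which lie at $\sim 3^{N-1}$ distinct scales from $J(Q)$---already gives $|I\!I_1|\gtrsim N\,w_N(x)$ from a \emph{single} generational step with ratio $3^{-N}$.

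Two further issues confirm that the construction must be reorganized. First, your choice $D_N=F_0=[0,1]^d$, together with $w_N\ge 1$ on $D_N$ and $w_N(D_N)=1$, forces $w_N\equiv 1$; the sparseness of the support is essential, not cosmetic. Second, a single fixed cone $\Gamma_\xi$ cannot handle the exterior contribution. In the paper the term $I_1$ (the integral over the complement of $Q$) is never bounded in size; instead the position of $J(Q)$ and the cone used for the children are chosen \emph{adaptively} at every step, using one of two cones $U^\pm$ on which $\Omega$ has opposite signs, according to the sign of $I_1$, so that $I_1$ and $I\!I_1$ always share the same sign and hence $|I_1+I\!I_1|\ge |I\!I_1|$. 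With a fixed direction there is no mechanism to prevent the exterior pieces from cancelling the main term.
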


\bigskip

The conclusion $Mw_N\leq C w_N $ in the support of $ w_N$ is what makes  $ w_N$ an $A_1$ weight in a local sense. We will first prove Theorems \ref{no.conj1}, \ref{no.conj2} and \ref{no.conj3} assuming that Proposition \ref{prop.pesos} is true, leaving its proof for the next section.

\bigskip

\begin{proof}[Proof of Theorem \ref{no.conj1}] Consider $T^\ast$ the adjoint operator of $T$. Note that $T$ is an essentially self-adjoint operator, indeed we have $T^\ast f(x)= Tf(-x)$. Given $N>0$ consider the weight $w_N$ associated to $T^\ast$ from Proposition \ref{prop.pesos}. Taking $f=w_N T^\ast w_N/(Mw_N)^2$, we have
\begin{equation}\label{positiva}
\int Tf\ w_N\ =\ \int f\ T^\ast w_N\ =\ \int \left|\frac{T^\ast w_N}{Mw_N}\right|^2 w_N\ \geq CN^2\,w_N(\widehat{D_N})\geq CN^2> 0.
\end{equation}
Considering $F$ to be the non-increasing rearrangement of $|Tf|$ with respect to $w_N$ in $\real^d$, we also have
\begin{eqnarray}\label{lambda}
\int \left|\frac{T^\ast w_N}{Mw_N}\right|^2 w_N&=&\int Tf\ w_N \leq \int |Tf|\ w_N\ = \int_0^{w_N(\real^d)} F(t)\,dt \leq \int_0^1 \frac{dt}{t^{1/2}}\  \sup_{s>0} s^{1/2} F(s) \nonumber\\&=& 2\ \sup_{\lambda>0} \lambda\ w_N(\{|Tf|>\lambda\})^{1/2} \nonumber\\&\leq& 3\ \lambda_0\ w_N(\{|Tf|>\lambda_0\})^{1/2},
\end{eqnarray}
for some $\lambda_0$. Combined with (\ref{positiva}), this yields 
\begin{equation}\label{idea}
\left(\int \left|\frac{T^\ast w_N}{Mw_N}\right|^2 w_N\right)^{1/2}\leq \frac CN \int \left|\frac{T^\ast w_N}{Mw_N}\right|^2 w_N \leq \frac CN\,\lambda_0\, w_N(\{|Tf|>\lambda_0\})^{1/2}.
\end{equation} 
Now we define $E=\{|Tf|>\lambda_0\}$ and $w=\chi_Ew_N$. Using H\"older's inequality and (\ref{idea}) we have
\[
\int |f|\ Mw = \int \frac{w_N\,T^\ast w_N}{(Mw_N)^2} Mw \leq \left(\int \Bigl|\frac{T^\ast w_N}{Mw_N}\Bigl|^2w_N\right)^{1/2}\left(\int \Bigl|\frac{Mw}{Mw_N}\Bigr|^2w_N\right)^{1/2} \leq \frac CN\ \lambda_0\ w_N(E),
\]
the last inequality provided we show the following Lemma.

\bigskip

\begin{lemma}
There exists a constant $C>0$ so that for all weights $v$ and all measurable sets $E\subset \real^n$ one has
\begin{equation}\label{desigualdad}
\left(\int \Bigl|\frac{M(\chi_Ev)}{Mv}\Bigr|^2 v\right)^{1/2} \leq C v(E)^{1/2}.
\end{equation}
\end{lemma}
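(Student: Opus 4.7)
The plan is to reduce inequality (\ref{desigualdad}) to a weighted weak-type estimate via the layer-cake formula and to prove that weak-type estimate by a direct covering argument. Set $g := M(\chi_E v)/Mv$. Since $\chi_E v \leq v$ pointwise, we have $M(\chi_E v) \leq Mv$, so $0 \leq g \leq 1$ (adopting the convention $0/0 = 0$ on the zero set of $Mv$). Therefore $\{g > \lambda\}$ is empty for $\lambda \geq 1$, and the layer-cake representation gives
$$\int g^2\, v \;=\; \int_0^1 2\lambda\, v(\{g > \lambda\})\, d\lambda.$$
Hence it is enough to show the weak-type estimate $v(\{g > \lambda\}) \leq C\lambda^{-1} v(E)$ for every $\lambda \in (0,1]$; substituting this into the display above yields $\int g^2 v \leq 2C\, v(E)$, which is (\ref{desigualdad}) squared.

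To prove the weak-type estimate, fix $\lambda > 0$ and pick any $x \in \{g > \lambda\}$. By the definition of $M$ there is a cube $Q_x \in \mathcal Q$ containing $x$ with $|Q_x|^{-1} v(Q_x \cap E) > \lambda\, Mv(x)$; because $x \in Q_x$, we also have $Mv(x) \geq |Q_x|^{-1} v(Q_x)$, and combining these two bounds yields $v(Q_x \cap E) > \lambda\, v(Q_x)$. The family $\{Q_x\}_{x \in \{g > \lambda\}}$ covers $\{g > \lambda\}$. Apply Besicovitch's covering theorem (valid for axis-parallel cubes) to extract a countable subfamily $\{Q_j\}$ that still covers $\{g > \lambda\}$ and whose overlap is dimensionally bounded, i.e., $\sum_j \chi_{Q_j} \leq B_d$. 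Then
$$v(\{g > \lambda\}) \;\leq\; \sum_j v(Q_j) \;<\; \frac{1}{\lambda}\sum_j v(Q_j \cap E) \;\leq\; \frac{B_d}{\lambda}\int \Bigl(\sum_j \chi_{Q_j}\Bigr) \chi_E\, v \;\leq\; \frac{B_d}{\lambda}\, v(E),$$
which is the required weak-type bound.

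The main obstacle is that the weight $v$ is arbitrary and so the measure $v\,dx$ need not be doubling; consequently, one cannot directly quote the weak-type $(1,1)$ of the Hardy-Littlewood maximal operator with respect to $v\,dx$. Besicovitch's theorem sidesteps this issue because its overlap constant is purely geometric, depending only on $d$. A minor technical point is that the cubes $Q_x$ arising above may have arbitrarily large diameter; this is handled in the standard way by first restricting to cubes of diameter at most $R$ (which makes Besicovitch applicable) and then letting $R \to \infty$ by monotone convergence.
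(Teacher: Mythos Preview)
Your argument is essentially the paper's: both establish the weak $(1,1)$ bound for the operator $f\mapsto M(fv)/Mv$ with respect to the measure $v\,dx$ via a Besicovitch covering argument, and then upgrade to $L^2$ using the trivial $L^\infty$ bound (the paper phrases this as Marcinkiewicz interpolation and records the result for general $f$ and all $p>1$, whereas you do the $p=2$, $f=\chi_E$ case by hand with the layer-cake formula).

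One technical slip: Besicovitch's covering theorem applies to families of cubes \emph{centered} at the points of the set being covered, but your $Q_x$ merely contains $x$. The paper deals with this by passing to the centered maximal operator $\widetilde M$, which is harmless since $\widetilde M\le M\le 2^d\widetilde M$ pointwise and hence $M(\chi_Ev)/Mv\le 2^d\,\widetilde M(\chi_Ev)/\widetilde M v$. Alternatively you can keep $M$ and enlarge: if $\widetilde Q_x$ is the cube centered at $x$ with side $2\,\ell(Q_x)$, then $Q_x\subset\widetilde Q_x$ and, using $Mv(x)\ge |\widetilde Q_x|^{-1}v(\widetilde Q_x)$, one gets $v(\widetilde Q_x\cap E)>2^{-d}\lambda\,v(\widetilde Q_x)$; now Besicovitch applies to the centered family $\{\widetilde Q_x\}$ and the rest of your estimate goes through with an extra factor $2^d$.
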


\bigskip

\noindent {\it Proof}.
Given a weight $v$ we define the operator $S_{v}$ for $f\in L_{\loc}^1(v)$ as
\[
S_{v}f(x)=\frac{M(fv)(x)}{Mv(x)}.
\]
We will prove indeed a stronger result, that for all $p>1$ one has 
\[
\int |S_vf|^p v\leq C \int |f|^pv.
\]
Since $M(fv)\leq \|f\|_{L^\infty(v)} Mv$, one has that $S_v$ is bounded on $L^\infty(v)$ with operator norm $1$. By interpolation, the result is proved if we show that $S_{v}$ is of weak type $L^1(v)$ with a constant independent of $v$. Since it makes no essential difference, we will see it for $\widetilde S_vf= \Mtilde(fv)/\Mtilde v$, where $\Mtilde$ denotes the centered maximal operator. Let $f\in L^1(v)$ and $0<\lambda<1$. If $\widetilde S_{v}f(x)>\lambda$, there exists $R_x>0$ so that 
\[
\fint_{Q(x,R_x)} |f|v  >\lambda\,\Mtilde v(x) \geq \lambda\, \fint_{Q(x,R_x)} v>0,
\]
where by $Q(x,R)$ we mean the cube in $\mathcal Q$ of edge length $R$ and centered at $x$. This implies that 
$$v(Q(x,R_x))\leq \frac1\lambda \int_{Q(x,R_x)} |f|v.$$ 
Observe that the cubes $Q(x,R_x)$ with $x\in A_\lambda:=\{x\in\real^d: \widetilde Sf(x)>\lambda\}$ are a Besicovitch cover of $A_\lambda$. By Besicovitch Covering Theorem (see \cite{Guzman}) there is a subcover by cubes $Q(x,R_x)$, with $x\in A_\star\subset A_\lambda$, such that each $x\in \real^d$ belongs to at most  $b_d$ cubes of the subcover, where $b_d$ is a number that only depends on the dimension. Then we have
\[
v(A_\lambda)\leq v\Bigl(\bigcup_{x\in A_\star}Q(x,R_x)\Bigr) \leq \sum_{x\in A_\star} v(Q(x,R_x)) \leq \frac1\lambda \sum_{x\in A_\star} \int_{Q(x,R_x)} |f|v\leq \frac{b_d}\lambda \int |f|v.
\]
This proves the lemma and, hence, Theorem \ref{no.conj1} too.
\end{proof}

\bigskip

Let us now prove Theorem \ref{no.conj3}.

\bigskip

\begin{proof}[Proof of Theorem \ref{no.conj3}]
We use the same `hump gliding' argument as in \cite{RegueraScurry}. Let $z\in \real^d$ be a unitary vector. We define $w:=\sum_{N=N_0}^\infty \widetilde w_N$, where $\widetilde w_N(x)=w_N(x-3^Nz)$ and $w_N$ are the weights described in Proposition \ref{prop.pesos}, starting at some $N_0$ large. We also define  $g:=\sum_{N=N_0}^\infty \frac1{N^\eps}\chi_{Q_N}$ with $Q_N=[0,1]^d+3^Nz$ and $1/p<\eps<1$. Finally, we take $u=w^{1-p}$ and $f=gw$.

\bigskip

First, we check that  $f\in L^p(u)$:
\begin{equation*}
\int |f|^p u =  \int g^p w= \sum_{N=N_0}^\infty \int_{Q_N} \frac1{N^{\eps p}}w_N(x-3^Nz)\,dx = \sum_{N=N_0}^\infty \frac1{N^{\eps p}}<\infty.
\end{equation*}

\bigskip

Next, we see that $Tf\not\in L^p(u)$. In order to do so, we write $\|Tf\|_{L^p(u)}$ as
\[
\left(\sum_{N=N_0}^\infty \int_{Q_N}\Bigl|\frac{1}{N^\eps} T\widetilde w_N(x)+\sum_{J\neq N} \frac1{J^\eps} T\widetilde w_J(x)\Bigr|^p \widetilde w_N(x)^{1-p}\,dx\right)^{1/p}.
\]
By the triangle inequality this is greater than or equal to $A-B$, where
\begin{eqnarray*}
A&=&\left(\sum_{N=N_0}^\infty \int_{Q_N}\Bigl|\frac{1}{N^\eps} T\widetilde w_N(x)\Bigr|^p \widetilde w_N(x)^{1-p}\,dx\right)^{1/p},\\ B&=&\left(\sum_{N=N_0}^\infty \int_{Q_N}\Bigl|\sum_{J\neq N} \frac1{J^\eps} T\widetilde w_J(x)\Bigr|^p \widetilde w_N(x)^{1-p}\,dx\right)^{1/p}.
\end{eqnarray*}
We will see that $A=\infty$ and $B<\infty$. We begin with $B$. If $x\in Q_N$ and $J\neq N$ we have
\begin{eqnarray*}
|T\widetilde w_J(x)|&\leq& \int_{Q_J} |K(x-y)|w_J(y-3^Jz)\,dy \leq \int_{R_J} \frac{C}{|3^N-3^J|^d} w_J(y-3^Jz)\,dy \\&\leq& \frac{C}{\max\{3^N,3^J\}^{d}} w_J([0,1]^d)\leq  \frac C{3^{dN/2}3^{dJ/2}}.
\end{eqnarray*}
Here we have used that for $y\in Q_J$ and $J\neq N$ one has $|x-y|\sim |3^N-3^J|\sim 3^N+3^J$. Hence,
\begin{eqnarray*}
B^p &\leq& C\sum_{N=N_0}^\infty \int_{Q_N}\Bigl|\sum_{J\neq N} \frac1{J^\eps}\ \frac1{3^{dN/2}3^{dJ/2}} \Bigr|^p w_N(x-3^Nz)^{1-p}\,dx
\\ &\leq& C\sum_{N=N_0}^\infty \Bigl|\sum_{J\neq N} \frac1{J^\eps}\ \frac1{3^{dN/2}3^{dJ/2}} \Bigr|^p < \infty.
\end{eqnarray*}
Now we proceed with $A$. Using an obvious change of variables in the integration and the property that $|Tw_N|\geq CN w_N$ in $\widehat{D_N}$ we have
\begin{eqnarray*}
A^p &=& \sum_{N=N_0}^\infty \frac1{N^{\eps p}} \int_{[0,1]^d}|Tw_N(x)|^p w_N(x)^{1-p}\,dx \geq \sum_{N=N_0}^\infty \frac1{N^{\eps p}} \int_{\widehat{D_N}}|Tw_N(x)|^p w_N(x)^{1-p}\,dx\\ &\geq& C \sum_{N=N_0}^\infty \frac {N^p}{N^{\eps p}} \int_{\widehat{D_N}} w_N(x)\,dx \geq C\sum_{N=N_0} N^{p(1-\eps)}=\infty.
\end{eqnarray*}

\bigskip

It remains to prove that $M$ is bounded on $L^p(u)$. Since it makes no essential difference we will prove it for the centered maximal operator $\Mtilde$ again. We define $\mathcal Q_w=\{Q\in \mathcal Q: w(Q)>0\}$. For $f\in L^p(u)$ and $Q\in \mathcal Q_w$ we have
\[
\frac1{|Q|}\int |f| = \frac{w(Q)}{|Q|}\ \frac1{w(Q)}\int_Q |fw^{-1}|w.
\]
This implies that
\[
\Mtilde f\leq \Mtilde\!w\ \Mtilde_w(fw^{-1}),
\]
where $\Mtilde_w$ is the centered maximal operator associated to $w$ defined by
\[
\Mtilde_w g(x)=\sup_{R>0,w(Q(x,R))>0} \frac1{w(Q(x,R))}\int_{Q(x,R)} |g|\,w.
\]
It is easy to check that for $x\in Q_N$ one has $Mw(x)\sim Mw_N(x-3^Nz)\leq C w_N(x-3^Nz)=Cw(x)$, that is 
\begin{equation}\label{a1.locala}
Mw\sim w\quad \mbox{ in } \supp w.
\end{equation}
Hence, since the same is true for $\Mtilde$, we have
\[
\int |\Mtilde f|^p w^{1-p} \leq \int |\Mtilde w|^p |\Mtilde_w(fw^{-1})|^p w^{1-p}
\leq C \int |\Mtilde_w(fw^{-1})|^p w.
\]
A well-known consequence of Besicovitch Covering Theorem is that $\Mtilde_w$ is bounded on $L^p(w)$. This, together with the observation that $f\in L^p(w^{1-p})$ if and only if $fw^{-1}\in L^p(w)$, finishes the proof.

\end{proof}

\bigskip

We now present the proof of Theorem \ref{no.conj2}. As we will see, everything  reduces to the same arguments used in the proof of Theorem \ref{no.conj3}.

\bigskip

\begin{proof}[Proof of Theorem \ref{no.conj2}]
At this point we assume that the reader is familiar with the notation and the circle of ideas surrounding the proof of Theorem \ref{no.conj3}. Taking again $w(x)=\sum_{N=N_0}^\infty w_N(x-3^Nz)$ we consider the weights $u=(Mw/w)^pw$ and $w$. In view of (\ref{a1.locala}), we have $u\sim w$ in $W=\supp w$, which  reduces the problem to the one weight setting.

\bigskip

It is easy to see that for an essentially self-adjoint operator $T$, the following inequalities are equivalent
\begin{eqnarray}\label{acotacion.normal}
\|Tf\|_{L^p(w)}&\leq& C_\star \|f\|_{L^p(u)},\label{uno}\\
\|T(fu^{1-p'})\|_{L^p(w)} &\leq& C_\star \|f\|_{L^p(u^{1-p'})},\nonumber\\
\|T(fw)\|_{L^{p'}(u^{1-p'})}&\leq& C_\star \|f\|_{L^{p'}(w)}.\label{tres}
\end{eqnarray}

\bigskip

Instead of (\ref{uno}) we will disprove (\ref{tres}). Taking again $g=\sum_{N=N_0}^\infty\frac1{N^\eps} \chi_{Q_N}$, with $1/p<\eps<1$, we have that $g\in L^{p'}(w)$. On the other hand,  
\[
\|T(gw)\|_{L^{p'}(u^{1-p'})}^{p'}=\int |T(gw)|^{p'} \frac{w}{(Mw)^{p'}} \geq C\int |T(gw)|^{p'} w^{1-p'},
\]
and this last quantity was shown to be infinite in the proof of Theorem \ref{no.conj3}, except that the roles of $p$ and $p'$ were interchanged.

\bigskip

To prove $M:L^p(u)\rightarrow L^p(v)$ is easy. For $f\in L^p(u)$, using Fefferman-Stein inequality (\ref{desigualdad.feffermanstein}) and (\ref{a1.locala}), we have
\[
\|Mf\|_{L^p(v)}^p=\int |Mf|^p w \leq C \int |f|^p Mw \leq C \int |f|^pw \leq C\int |f|^p \Bigl(\frac{Mw}{w}\Bigl)^p w = C\|f\|_{L^p(v)}^p.
\]
We finish showing that $M:L^{p'}(v^{1-p'})\rightarrow L^{p'}(u^{1-p'})$. Similarly as before, for $f\in L^{p'}(w^{1-p'})$ we have
\[
\|Mf\|_{L^{p'}(u^{1-p'})}^{p'}=\int |Mf|^{p'} \frac{w}{(Mw)^{p'}} \leq \int |Mf|^{p'} w^{1-p'} \leq C\int |f|^{p'} w^{1-p'},
\]
where the last inequality was obtained in the proof of Theorem \ref{no.conj3} for $p$ instead of $p'$.
\end{proof}

\bigskip

\section{The construction of the weights}

\bigskip

The construction of the weights $w_N$ in Proposition \ref{prop.pesos} is an extension to higher dimension of the one by M.C. Reguera and C. Thiele in \cite{RegueraThiele}, which in turn was a simplification of the construction by M.C. Reguera in \cite{Reguera}. The argument is long and involves some technicalities.

\bigskip

\begin{proof}[Proof of Proposition \ref{prop.pesos}.]  First we will give the basics of the construction of the weight $w_N$ and of the sets $D_N$ and $\widehat{D_N}$. Then we will proceed to estimate $Mw_N$ on $D_N$ and $Tw_N$ on $\widehat{D_N}$, and we will complete the details of the construction of $w_N$ so that the conclusion is reached.

\bigskip

\textbf{The triadic decomposition.} For $k\in\mathbb Z$, we say that $Q$ is a triadic cube of the $k$-th generation in $\real^n$, if $Q$ has edge length $3^{-k}$ and its vertices are points of the grid $3^{-k} \mathbb Z^n$. For any cube $Q=Q(x,R)$ we define its triadic middle child as $\widehat{Q}=Q(x,R/3)$. For $k=0,1,2,\ldots$ we will consider $\mathcal T_k$ as a family of triadic cubes of the $(Nk)$-th generation, with $N\in\mathbb N$ fixed. We define these families inductively. We begin with $\mathcal T_0=\{[0,1]^d\}$. Once $\mathcal T_k$ is determined, for each $Q\in\mathcal T_k$ we will select a family $\mathcal T_{k+1}(Q)$ of triadic subcubes so that $\mathcal T_{k+1}(Q)\subset \{ \mbox{triadic } Q' \subset \widehat{Q}, \ |Q'|=3^{-Nd}|Q|\}$ and $\sharp T_{k+1}(Q)= A\sim 3^{(N-1)d}$, with $A\in\mathbb N$ a fixed number depending neither on $Q$ nor on $k$. The exact way of selecting these cubes will be explained later. Then we take $\mathcal T_{k+1}=\bigcup_{Q\in \mathcal T_k} \mathcal T_{k+1}(Q)$. 

\bigskip

Contained in each $Q\in\mathcal T_k$ we consider a triadic cube $J(Q)$ such that $|J(Q)|=|Q'|=3^{-Nd(k+1)}$ for any $Q'\in \mathcal T_{k+1}$. We will place $J(Q)$ having disjoint interior with respect to $\widehat{Q}$ but contiguous to it, in the sense that their boundaries intersect. In particular, the elements of the family $\displaystyle  \left\{J(Q)\right\}_{Q\in\bigcup_{k=0}^\infty  \mathcal T_k}$ are all disjoint. Moreover, if $N\ge 3$ and $Q_0\in \mathcal T_{k_0}$, for some $k_0$, 
\begin{equation}\label{distance}
\rm{dist}\left(J(Q_0),\left[\bigcup_{k=0}^\infty \bigcup_{Q\in \mathcal T_{k}}J(Q)\right]\setminus J(Q_0)\right)
\geq \frac{\ell}3-\frac{\ell}{3^N}\geq \frac{\ell}4,
\end{equation}
with $\ell=|J(Q)|^{1/d}$.

\bigskip

\textbf{The construction of the weight.} We define a weight $w_N$ supported in 
$$D_N = \bigcup_{k=0}^\infty \bigcup_{Q\in \mathcal T_k} J(Q),$$
 so that $w_N$ is constant over each $J(Q)$ and if $x\in J(Q)$ with $Q\in \mathcal T_{k}$ one has
\begin{equation}\label{cond.medida}
\alpha_{k} = w_N(x) = \frac{w_N(J(Q))}{|J(Q)|} = \frac{w_N(Q')}{|Q'|},
\end{equation}
for any $Q'\in \mathcal T_{k+1}$. In this way
\[
w_N(x)=\sum_{k=0}^\infty \alpha_k\sum_{Q\in \mathcal T_k} \chi_{J(Q)}.
\]
Observe that for $Q\in \mathcal T_k$
\[
w_N(Q)=w_N(J(Q))+w_N(\widehat{Q})=w_N(J(Q))+\sum_{Q'\in T_{k+1}(Q)} w_N(Q').
\]
Using (\ref{cond.medida}), the previous formula can be rewritten as 
\[
\alpha_{k-1} |Q| = \alpha_{k} |J(Q)|+\alpha_{k}\sharp\mathcal T_{k+1}(Q)\ |J(Q)| = \alpha_{k} |J(Q)|+\alpha_{k}A\ |J(Q)|,
\]
obtaining that
\[
\frac{\alpha_{k}}{\alpha_{k-1}}=\frac{3^{Nd}}{1+A}=:a.
\]
Hence, $\alpha_k=a^k\alpha_0$, for certain $\alpha_0$ and
\begin{eqnarray*}
w_N([0,1]^d)&=&\sum_{k=0}^\infty \sum_{Q\in\mathcal T_k} w_N(J(Q))= \alpha_0 \sum_{k=0}^\infty \sharp \mathcal T_k\  |J(Q)|\ a^k = \alpha_0\ \sum_{k=0}^\infty A^k 3^{-Nd(k+1)}a^k \\&=& \alpha_0\ 3^{-Nd}\,\sum_{k=0}^\infty \left( \frac{A}{1+A}\right)^k = \alpha_0\ 3^{-Nd}(1+A)=\alpha_0/a.
\end{eqnarray*}
We take $\alpha_0=a$ so that $w_N$ is a probability measure and $w_N\geq a>1$ in $D_N$, as stated.

\bigskip

\textbf{Controlling the maximal function.} We prove here that $Mw_N \leq Cw_N$ in $D_N$, with a constant $C$ independent of $N$. Fix $x\in J(Q)$ with $Q\in \mathcal T_k$ and take an arbitrary cube $R$ containing $x$. We want to show that 
$$\displaystyle \frac{w_N(R)}{|R|}\le Cw(x).$$
If  $|R|^{1/d} < 1/4|J(Q)|^{1/d}$, then $R\cap D_N=R\cap J(Q)$ from (\ref{distance}). This says that $w$ is constant in $R\cap J(Q)$ and the result is obvious. If, on the contrary, $|R|^{1/d}\geq 1/4|J(Q)|^{1/d}$ and we consider
\[
\mathcal A=\{ \mbox{triadic } Q',\ Q'\cap R\neq \emptyset,\ |Q'|=|J(Q)|\},
\]
then  $\sum_{Q'\in \mathcal A} |Q'|\leq 9^d |R|$. We claim that 
if $L\subset [0,1]^d$ is a triadic cube with size $|L|=|J(Q)|$ then
\begin{equation}\label{posibles.valores}
w_N(L) \leq \alpha_k |L|. 
\end{equation}
Using this, one has
\[
\frac{w_N(R)}{|R|}\leq \frac{1}{|R|}\sum_{Q'\in \mathcal A} w_N(Q')\leq  \frac{{\alpha_{k}}}{|R|}\sum_{Q'\in \mathcal A} |Q'| \leq 9^d\ w_N(x).
\]
The proof of (\ref{posibles.valores}) is easy. We have three possible situations: 

\begin{enumerate}[i)]
\item $L \cap D_N=\emptyset$, and there is nothing to show.

\item $L\subset J(Q_0)$, for some $Q_0 \in \mathcal T_j$ and $j\le k$. In this case $w_N$ is constant in $L$ with value $\alpha_j$. Since $\alpha_j\leq \alpha_k$, the result follows immediately. 

\item $L=Q'$ for some $Q' \in \mathcal T_{k+1}$. Here we have directly $w_N(L) =\alpha_k |L|$ by definition. 
\end{enumerate}

\bigskip

\textbf{Splitting $T w_N$ into `continuous' and `discrete' pieces.} Taking, by a slight abuse of notation, $\widehat{D_N}:=\bigcup_{k=0}^\infty \bigcup_{Q\in\mathcal T_{k}} \widehat{J(Q)}$, we want to prove that $|T w_N|\geq CN w_N$ in $\widehat{D_N}$. 

\bigskip

Let $x\in \widehat{J(Q)}$, with $Q\in \mathcal T_{k}$.  Then we have
\begin{eqnarray*}
T w_N(x) &=& \int_{Q^c}\! K(x,y)w_N(y)\,dy+ \int_{Q\setminus J(Q)}\!K(x,y)w_N(y)\,dy +\  p.v.\int_{J(Q)}\! K(x,y)w_N(y)\,dy\\
&=& I+ I\!I+ I\!I\!I.
\end{eqnarray*}
We further split $I$ and $I\!I$ into a `continuous' and a `discrete' part. Denoting by $c_R$ the center of a cube $R$, we have
\begin{eqnarray*}
I &=& \sum_{\begin{array}{c}\\[-6mm]\scriptstyle{L \, 	\rm{triadic} }\\[-1.5mm] \scriptstyle{|L|=|Q|, \, L\neq Q}\end{array}} \int_L K(x,y)w_N(y)\,dy 
\\&=& \sum_{\begin{array}{c}\\[-6mm]\scriptstyle{L \, 	\rm{triadic} }\\[-1.5mm] \scriptstyle{|L|=|Q|, \, L\neq Q}\end{array}} K(c_Q,c_L)w_N(L)\,dy+
\sum_{\begin{array}{c}\\[-6mm]\scriptstyle{L \, 	\rm{triadic} }\\[-1.5mm] \scriptstyle{|L|=|Q|, \, L\neq Q}\end{array}}\int_L \bigl(K(x,y)-K(c_Q,c_L)\bigr)w_N(y)\,dy\\&=&
I_1+I_2,
\end{eqnarray*}
and
\begin{eqnarray*}
I\!I &=& \sum_{L\in\mathcal T_{k+1}(Q)} \int_L K(x,y)w_N(y)\,dy 
\\&=& \sum_{L\in\mathcal T_{k+1}(Q)} K(c_{J(Q)},c_L)w_N(L)\,dy+\sum_{L\in\mathcal T_{k+1}(Q)} \int_L \bigl(K(x,y)-K(c_{J(Q)},c_L)\bigr)w_N(y)\,dy\\&=&
I\!I_1+I\!I_2.
\end{eqnarray*}

\bigskip

First, we will show that the `continuous' parts $I_2$, $I\!I_2$ and $I\!I\!I$ are `small' in the sense that $|I_2|+|I\!I_2|+|I\!I\!I|\lesssim w_N(x)$. Then we will show that $I\!I_1$ is much bigger than $w_N$ by showing that $|I\!I_1|\gtrsim N w_N(x)$. Although we will not have any control on $I_1$, we will  construct $J(Q)$ and ${\mathcal T_{k+1}}(Q)$ so that $I\!I_1$ has the same sign as $I_1$. In this way, we will have 
 $|I_1+I\!I_1|\geq |I\!I_1| \gtrsim N w_N(x)$. At that point we will get
\[
|T w_N(x)| \geq |I_1+I\!I_1| - |I_2+I\!I_2+I\!I\!I| \geq (cN-C)\ w_N(x)\geq CN w_N(x),
\]
for sufficiently large $N$. This would prove the result.

\bigskip

\textbf{The `continuous' pieces.} We recall the well-known fact (see \cite{Stein} for instance) that our hypotheses on $K$ imply the following estimates: there exist $\delta, \eta>0$ so that
\begin{equation}\label{CZy}
|K(x,y)-K(x,\bar y)|\leq C\frac{|y-\bar y|^\delta}{|x-y|^{d+\delta}},
\end{equation}
if $|x-y|>(1+\eta)|y-\bar y|$, and 
\begin{equation}\label{CZx}
|K(x,y)-K(\bar x,y)|\leq C\frac{|x-\bar x|^\delta}{|x-y|^{d+\delta}},
\end{equation}
if $|x-y|>(1+\eta)|x-\bar x|$. These estimates give rise to the so called $\delta$--Calder\'on-Zygmund kernels. Although in our case we have $\delta=1$, it is worth observing that this part of the construction works for these more general kernels too. 

\bigskip

When estimating $I_2$, first we use that $x\in \widehat{Q}$ and $y\in \widehat{L}$ to deduce
\[
|x-y|\sim |x-c_L|\sim |c_Q-c_L|,
\]
and as a consequence
\begin{eqnarray}\label{tamanyo.nucleo}
|K(x,y)-K(c_Q,c_L)|&\leq& |K(x,y)-K(x,c_L)|+|K(x,c_L)-K(c_Q,c_L)| \nonumber\\&\lesssim& \frac{|x-c_Q|^\delta}{|x-y|^{d+\delta}}+\frac{|y-c_L|^\delta}{|x-y|^{d+\delta}}\lesssim \frac{|Q|^{\delta/d}}{|y-x|^{d+\delta}}.
\end{eqnarray}
Hence,
\begin{eqnarray*}
|I_2|&\lesssim & |Q|^{\delta/d} \sum_{\begin{array}{c}\\[-6mm]\scriptstyle{|L|=|Q|}\\[-1.5mm] \scriptstyle{[0,1]^n\supset L\neq Q}\end{array}} \int_L \frac{w_N(y)}{|x-y|^{d+\delta}}\,dy\\&\leq& |Q|^{\delta/d} \int_{|x-y|>|Q|^{1/\delta}/4} \frac{w_N(y)}{|x-y|^{d+\delta}}\,dy \lesssim Mw_N(x).
\end{eqnarray*}
The last inequality follows from the fact that  $x\mapsto |x|^{-d-\delta}$ is a radially decreasing function and
\[
\int_{|x-y|>|Q|^{1/\delta}/4} \frac{1}{|x-y|^{d+\delta}}\,dy = \bigl|\mathbb S^{d-1}\bigr|_{d-1} \int_{|Q|^{1/d}/4}^\infty \frac{1}{t^{1+\delta}}\,dt \sim \frac{1}{|Q|^{\delta/d}}.
\]
(See \cite{Stein}.)
\bigskip

We estimate $I\!I_2$ in a  similar way. Since $J(Q)$ is not contained in $\widehat{Q}$, for $x\in J(Q)$, $y\in L \in {\mathcal T_{k+1}}(Q)$ and $v_{J(Q)}\in J(Q)$ to be determined later, one has
\[
|x-y|\sim |x-c_L|\sim |v_{J(Q)}-c_L|,
\]
and
\begin{eqnarray*}
|K(x,y)-K(v_{J(Q)},c_L)|&\leq& |K(x,y)-K(x,c_L)|+|K(x,c_L)-K(v_{J(Q)},c_L)| \\&\lesssim& \frac{|J(Q)|^{\delta/d}}{|y-x|^{d+\delta}}.
\end{eqnarray*}
Then, reasoning as before we obtain again
\begin{eqnarray*}
|I\!I_2|&\lesssim & |J(Q)|^{\delta/d} \sum_{L\in \mathcal T_{k+1}(Q)} \int_L \frac{w_N(y)}{|x-y|^{d+\delta}}\,dy\\&\leq& |J(Q)|^{\delta/d} \int_{|x-y|>|J(Q)|^{1/\delta}/3} \frac{w_N(y)}{|x-y|^{d+\delta}}\,dy \lesssim Mw_N(x).
\end{eqnarray*}

\bigskip

In order to bound $I\!I\!I$, we use that $w_N$ is constant over $J(Q)$ and the cancellation property of $K$ on $E=\{y:|x-y|<|\widehat{J(Q)}|^{1/d}\}$ to obtain
\begin{eqnarray}
I\!I\!I &=& w_N(x)\ p.v.\int_{J(Q)} \frac{\Omega(x-y)}{|x-y|^{d}}\,dy \nonumber\\&=& w_N(x)\ \int_{J(Q)\setminus E} \frac{\Omega(x-y)}{|x-y|^{d}}\,dy.\label{cancellation}
\end{eqnarray}
Hence
\begin{eqnarray*}
|I\!I\!I| &\leq & w_N(x)\ \int_{J(Q)\setminus E} \frac {\|\Omega\|_{L^\infty}} {|\widehat{J(Q)}|} \,dy \lesssim w_N(x).
\end{eqnarray*}

\bigskip

\begin{xrem} Observe that in all the above estimates we have not needed a precise description of the construction of the families ${\mathcal T_{k+1}}(Q)$ and the cubes $J(Q)$. The only information we have used so far is that each $Q' \in {\mathcal T_{k+1}}(Q)$ is a triadic subcube of $\widehat Q$ of size $3^{-Nd(k+1)}$ and that $J(Q)$ is of the same size and \lq touches' $\widehat Q$ from the outside.

Another important observation is that for $Q \in {\mathcal T_{k}}$, the term $I_1=I_1(Q)$ does not depend 
on the triadic cubes of the next generation. In particular, $I_1(Q)$ is independent of ${\mathcal T_{i}}$, for all $i>k$. This is consistent with the inductive process that we use in order to define our weights $w_N$.
\end{xrem}

\bigskip

\textbf{The `discrete' pieces in a simpler case: Riesz Transforms.} To get some intuition of the construction, we will first consider a concrete example. Assume that $T$ is a Riesz Transform, that is $T=R_j$ for some $j\in \{1,2,\dots,d\}$, where
\[
R_j f(x)= c_d\ p.v.\! \int_{\real^n} \frac{x_j-y_j}{|x-y|^{d+1}}f(y)\,dy,
\]
and $c_d$ is a normalizating constant depending on the dimension. In this case, given $Q\in \mathcal T_k$ we choose $\mathcal T_{k+1}(Q)$ to consist of all the triadic subcubes of $ \widehat{Q}$ of size $3^{-Nd}|Q|\}$. We take $J(Q)$ to be a triadic cube of size $3^{-N(k+1)d}$ contiguous to $\widehat{Q}$ so that their boundaries only share a point, hence a vertex. For $x\in \mathbb R^d$, we denote by $x_j$ its $j$-th coordinate. Now,
if $I_1\geq 0$ we place $J(Q)$ so that $\min_{x\in J(Q)}x_j\geq \max_{x\in \widehat{Q}} x_j$ and if $I_1\leq 0$ we require instead $\max_{x\in J(Q)}x_j\geq \min_{x\in \widehat{Q}} x_j$. This makes the signs of $I_1$ and $I\!I_1$ coincide. Calling 
$$\mathcal T_{k+1}^i(Q) = \{L\in \mathcal T_{k+1}(Q): |(c_L)_j-(c_{J(Q)})_j|=|c_L-c_{J(Q)}|_\infty= 
3^{-N(k+1)}i\},$$ 
and taking $v_{J(Q)}=c_{J(Q)}$ we have
\begin{eqnarray*}
|I\!I_1|&\gtrsim&\sum_{L\in\mathcal T_{k+1}(Q)} \frac{|(c_L)_j-(c_{J(Q)})_j|}{\bigl|c_L-c_{J(Q)}\bigr|_\infty^{d+1}}\  w_N(L) =  a^{k+1}|J(Q)| \sum_{i=1}^{3^{N-1}}\sum_{L\in\mathcal T_{k+1}^i(Q)} \frac{1}{\bigl|c_L-c_{J(Q)}\bigr|_\infty^d} 
\\&=& w_N(x)|J(Q)| \sum_{i=1}^{3^{N-1}} \frac{i^{d-1}}{(3^{-N(k+1)}i)^d} \,=\,w_N(x) \sum_{i=1}^{3^{N-1}} \frac 1i \gtrsim N w_N(x).
\end{eqnarray*}
Observe also that in this case $A=3^{(N-1)d}$ and, therefore, $\displaystyle a=\frac {3^{Nd}}{1+A}\sim 3^d$.

\bigskip

\textbf{Finishing the construction of the measure for a general operator.} We will now explain how we chose $J(Q)$ and $\mathcal T_{k+1}(Q)$ so that $I\!I_1$ behaves the way we need when $T$ is a general Calder\'on-Zygmund operator. This choice will depend on $T$.

\bigskip

Since $\Omega$ is a continuous function over the sphere with null integral mean, there exist $\lambda>0$, $r>0$ and two points in the sphere $z_+$ and $z_-$ so that for any $y\in B^+= B(z_+,r)\cap \mathbb S^{d-1}$ one has $\Omega(y)>\lambda$ and for any $y\in B^-=B(z_-,r)\cap \mathbb S^{d-1}$ one has $\Omega(y)<-\lambda$. We have the same bounds for $\Omega$ all over the cones $U^+=\{tx: t>0, x\in B^+\}$ and $U^-=\{tx: t>0, x\in B^-\}$. Using a rotation if necessary, we can assume that $z_+$ and $z_-$ are symmetric with respect to all the coordinate axis and that none of their coordinates are zero. This can be expressed in terms of coordinates with the relation $|(z_+)_i|=|(z_-)_i|\neq 0$ for all $i=1,\ldots,d$, or $z_+= \tau z_-$ with 
\[
\tau=\left(\delta_{i,j} \frac{\sign(z_-)_i}{\sign(z_+)_j}\right)_{i,j=1,\cdots,d}=\left[\begin{array}{cccc} \pm 1 & 0 & \cdots  & 0\\ 0&\pm 1&\cdots&0\\ \vdots&\vdots& \ddots& \vdots \\ 0&0&\cdots&\pm 1 \end{array}\right].
\]
Note that also $U^-=\tau U^+$.

\bigskip

For a $Q\in \mathcal T_{k}$ we denote by $v_+$ (respectively, $v_-$) the only vertex of $\widehat{Q}$ such that the half-line $s_+\equiv v_++t z_+$  (respectively, $s_-\equiv v_-+t z_-$), for $t>0$,  intersects the interior of $\widehat{Q}$. If $I_1\geq 0$ we will choose $v=v_{J(Q)}:=v_+$, $z=z_+$ and $U=U^+$. On the other hand, if $I_1\leq 0$ we choose $v=v_{J(Q)}:=v_-$, $z=z_-$ and $U=U^-$. Now we take $J(Q)$ to be the only triadic cube of size $3^{-Nd}|Q|$ so that the boundaries of $J(Q)$ and of $ \widehat{Q}$ intersect only at $v$. Once this is done we take 
\[
\mathcal T_{k+1}(Q)=\{\mbox{triadic } R\subset \widehat{Q} :\ |R|=3^{-Nd}|Q|,\ c_R \in v+U\}.
\]
The construction guarantees that $A=\sharp {\mathcal T}_{k+1}(Q)\sim 3^{(N-1)d}$ is independent of $k$ and $Q$, as required before.

\bigskip

\begin{center}
\begin{tikzpicture}

\def\numero{18}
\def\nine{10}
\pgfmathsetmacro\three{\nine/3}
\pgfmathsetmacro\six{2*\three}
\pgfmathsetmacro\r{\three/\numero}

\coordinate (O) at (0,0);
\coordinate (A) at (\three,\three);
\coordinate (B) at (\three,\six);
\coordinate (C) at (\six,\six);
\coordinate (D) at (\six,\three);

\draw (O) -- (\nine,0) -- (\nine,\nine) -- (0,\nine) -- cycle;

\path [name path=lineal] (B) -- (C) -- (D);

\draw (A) -- ($(A)-(\r,0)$)--($(A)-(\r,\r)$)--($(A)-(0,\r)$)--cycle;
\draw (0,\nine) node [above] {$Q\in \mathcal T_k$};
\draw ($(A)-(\r,\r)$) node [below] {$J(Q)$};
\draw (\three,\six) node [above] {$\widehat{Q}$};
\draw [fill] (A) circle (0.5mm) node [left,yshift=5pt,xshift=2pt] {$v$};

\coordinate (H) at (A);
\coordinate (J) at (A);
\coordinate (K) at (D);
\coordinate (L) at (B);

\foreach \i in {2,...,\numero}
	{
	\coordinate (J) at ($(J)+(0,\r)$);
	\coordinate (K) at ($(K)+(0,\r)$);	
	\draw [color=black!20] (J) -- (K);
	\coordinate (L) at ($(L)+(\r,0)$);
	\coordinate (H) at ($(H)+(\r,0)$);
	\draw [color=black!20] (L) -- (H);
	}

\def\ana{70}
\def\anb{40}

\path [draw,thick,name path=lineau1] (A) -- ($(A)+(\ana:\six)$);
\path [draw,thick,name path=lineau2] (A) -- ($(A)+(\anb:\six)$);

\path [draw,color=black!40,<->,shorten >=2pt,shorten <=2pt] ($(A)+(\ana:11.5/12*\six)$) arc (\ana:\anb:11.5/12*\six);
\draw ($(A)+(\ana/2+\anb/2:\six)$) node [yshift=-2pt,xshift=9pt] {$v+U$};

\def\anglealfa{65}
\def\anglebeta{47}

\path [name intersections={of=lineal and lineau1}];
\coordinate (I) at (intersection-1);

\path [name intersections={of=lineal and lineau2}];
\coordinate (J) at (intersection-1);

\coordinate (segunda) at ($(A)+(\r/2,\r/2)$);

\begin{scope}
	\clip (A)--(J)--(C)--(I)--cycle;
	\foreach \i in {1,...,\numero}
		{
		\coordinate (primera) at (segunda);	
		\foreach \j in {1,...,\numero}
			{
			\fill (primera) circle (0.2mm);
			\coordinate (primera) at ($(primera)+(\r,0)$);
			}
		\coordinate (segunda) at ($(segunda)+(0,\r)$);			
		}
\end{scope}

\pgfmathsetmacro\numerito{\numero-3}

\pgfmathsetmacro\up{\three/2.5}%
\draw (\nine/2,0) node [above,yshift=3 mm] {$\begin{array}{c} \mbox{The cubes in }\mathcal T_{k+1}(Q)\mbox{ are the triadic subcubes of }\\ \widehat{Q}\mbox{ whose size}\mbox{ equals the one of }J(Q) \\\mbox{ and whose centers are in the cone }v+U.\end{array}$};

\draw (A) -- (B) -- (C) -- (D) -- cycle;



\end{tikzpicture}
\end{center}

\bigskip

\textbf{Estimating the `discrete' pieces.} Since $c_L\in W$ for all $L\in \mathcal T_{k+1}(Q)$ we have
\[
|I\!I_1|=\left|\sum_{L\in\mathcal T_{k+1}(Q)} K(v,c_L)w_N(L)\right|\geq  \lambda a^{k+1} |J(Q)| \sum_{L\in\mathcal T_{k+1}(Q)} \frac{1}{|c_L-v|^d}.
\]

\bigskip

We want to find a lower estimate for the last sum. We could use an argument similar to the one for the Riesz transforms but we will use a more direct one. For a positive integer $i$ we define
\[
\Gamma_i=\{x\in v+U\cap \widehat{Q}: 3^{i-1}\,3^{-N(k+1)}<|x-v|\leq 3^i\ 3^{-N(k+1)}\}.
\]
We also define
\[
\mathcal T_{k+1}^i(Q)=\{ R\in \mathcal T_{k+1}(Q): c_R\in \Gamma_i\}.
\]

\bigskip

\begin{center}
\begin{tikzpicture}

\def\numero{27}
\def\nine{18}
\pgfmathsetmacro\three{\nine/3}
\pgfmathsetmacro\six{2*\three}
\pgfmathsetmacro\r{\three/\numero}
\pgfmathsetmacro\cono{\nine/2}

\coordinate (O) at (0,0);
\coordinate (A) at (\three,\three);
\coordinate (B) at (\three,\six);
\coordinate (C) at (\six,\six);
\coordinate (D) at (\six,\three);

\path [name path=lineal] (B) -- (C) -- (D);

\draw (A) -- ($(A)-(\r,0)$)--($(A)-(\r,\r)$)--($(A)-(0,\r)$)--cycle;
\draw ($(A)-(\r,\r)$) node [below] {$J(Q)$};
\draw (\three,\six) node [above] {$\widehat{Q}$};
\draw [fill] (A) circle (0.5mm) node [left,yshift=5pt,xshift=2pt] {$v$};

\coordinate (H) at (A);
\coordinate (J) at (A);
\coordinate (K) at (D);
\coordinate (L) at (B);

\foreach \i in {2,...,\numero}
	{
	\coordinate (J) at ($(J)+(0,\r)$);
	\coordinate (K) at ($(K)+(0,\r)$);	
	\draw [color=black!20] (J) -- (K);
	\coordinate (L) at ($(L)+(\r,0)$);
	\coordinate (H) at ($(H)+(\r,0)$);
	\draw [color=black!20] (L) -- (H);
	}

\def\ana{70}
\def\anb{40}

\path [draw,thick,name path=lineau1] (A) -- ($(A)+(\ana:\cono)$);
\path [draw,thick,name path=lineau2] (A) -- ($(A)+(\anb:\cono)$);

\path [draw,color=black!40,<->,shorten >=2pt,shorten <=2pt] ($(A)+(\ana:11.5/12*\cono)$) arc (\ana:\anb:11.5/12*\cono);
\draw ($(A)+(\ana/2+\anb/2:\cono)$) node [yshift=-2pt,xshift=12pt] {$v+U$};

\def\anglealfa{65}
\def\anglebeta{47}

\path [name intersections={of=lineal and lineau1}];
\coordinate (I) at (intersection-1);

\path [name intersections={of=lineal and lineau2}];
\coordinate (J) at (intersection-1);

\coordinate (segunda) at ($(A)+(\r/2,\r/2)$);

\begin{scope}
	\clip (A)--(J)--(C)--(I)--cycle;
	\foreach \i in {1,...,\numero}
		{
		\coordinate (primera) at (segunda);	
		\foreach \j in {1,...,\numero}
			{
			\fill (primera) circle (0.2mm);
			\coordinate (primera) at ($(primera)+(\r,0)$);
			}
		\coordinate (segunda) at ($(segunda)+(0,\r)$);			
		}
\end{scope}

\foreach \i in {1,3,9,27,81}
		{
		\coordinate (I) at ($(A)+(90:\three/\i)$);
		\draw [dashed] (I) arc (90:0:\three/\i);
		\coordinate (J) at ($(A)+(\ana:\three/\i)$);
		\draw [thick] (J) arc (\ana:\anb:\three/\i);		
		}
\pgfmathsetmacro\seno{sin(\ana/2+\anb/2)}

\draw [->,black!50] ($(A)+(\ana/2+\anb/2:\three/1.5)$)--($(A)+(-0.5,\seno*\three/1.5)$) node [left,black] {$\mathcal T_{k+1}^{i+1}(Q)$};

\draw [->,black!50] ($(A)+(\ana/2+\anb/2:\three/4)$)--($(A)+(-0.5,\seno*\three/4)$) node [left,black] {$\mathcal T_{k+1}^i(Q)$};

\draw (A) -- (B) -- (C) -- (D) -- cycle;

\end{tikzpicture}
\end{center}

\bigskip

Now we choose $N$ large enough to make $J(Q)$ very small compared to $\Gamma_{\lfloor N/2\rfloor}$, so that the measure of $\Gamma_{i}$ is comparable to the sum of the measures of the cubes in $\mathcal T_{k+1}^i$ for $\lfloor N/2\rfloor\leq i\leq N-1$, that is
\[
|\Gamma_{i}|\ \ \sim \sum_{R\in \mathcal T_{k+1}^{i}(Q)} |R|\ \ =\ \ \sharp \mathcal T_{k+1}^{i}(Q)\ |J(Q)|.
\]
Note that $|J(Q)|=3^{-Nd(k+1)}$ and $|\Gamma_i|= \beta\  (3^{d(i-N(k+1))}-3^{d(i-1-N(k+1))})=\ 2\beta\ 3^{d(i-1-N(k+1))}$ for certain $\beta>0$ that depends on the opening of the cube $U$. The previous choice of $N$ is possible since the quotient of the measures of $\Gamma_{\lfloor N/2\rfloor}$ and $J(Q)$ is of the order of $3^{dN/2}$. The conclusion is that for $\lfloor N/2\rfloor\leq i\leq N-1$ one has $\sharp \mathcal T_{k+1}^i(Q) \sim 3^{di}$ and as a consequence
\begin{eqnarray*}
|I\!I_1| &\geq& \lambda w_N(x)\ |J(Q)| \sum_{i=\lfloor N/2\rfloor}^{N-1}\sum_{L\in \mathcal T_{k+1}^i(Q)} \frac1{|c_L-v|^d} \\&\gtrsim& \lambda w_N(x)|J(Q)| \sum_{i=\lfloor N/2\rfloor}^{N-1} \frac{\sharp \mathcal T_{k+1}^i(Q)}{|3^{i}\ 3^{-N(k+1)}|^d}\ \ \gtrsim\ \ \lambda N w_N(x).
\end{eqnarray*}
This finishes the proof of Proposition \ref{prop.pesos}.

\end{proof}



\bigskip

\section{Final remarks.} 

\noindent\textbf{Variable Kernels.} We point out that most of the arguments of the previous proof also work if $K$ is a variable Calder\'on-Zygmund kernel with the standard size conditions. Thus, a similar construction is possible for such kernels, if in addition they have an adequate distribution of signs so that one can find cones defining $\mathcal T_{k+1}(Q)$ as before.

\bigskip

\noindent\textbf{Counterexamples for condition (\ref{cond.perez.cu}).} It is implicit in the proof of Theorem \ref{no.conj1} that the weights $w_N$ together with the functions $f_N=w_N T^\ast w_N/(Mw_N)^2$ give counterexamples for
the condition (\ref{cond.perez.cu}) established by C. P\'erez and D. Cruz-Uribe. As already pointed out in \cite{RegueraScurry} the election of $u$ and $v$ in the proof of Theorem \ref{no.conj2} gives again counterexamples for (\ref{cond.perez.cu}). The point in the given proof of Theorem \ref{no.conj1} is to produce an explicit counterexample for Conjecture \ref{conj1}. An interesting observation is that the weights $w_N$ do satisfy Conjecture \ref{conj1}. To see this, recall that (\ref{conjetura}) is true for $M$ replaced by $M^2$ and then apply the `local' $A_1$ condition $Mw_N\lesssim w_N$ in $D_N$.

\bigskip

\noindent\textbf{`Local $A_p$' weights.} It is clear that \lq local' $A_p$ weights share some of the properties of the usual Muckenhoupt $A_p$ weights. For example, it is easy to see  that conditions (\ref{a1.un.peso}) and (\ref{ap.un.peso}), satisfied on the support of the weight, are equivalent to the weak boundedness of $M$ on weighted $L^p$. However, there are some important differences too. One of them is the non existence of a reverse H\"older inequality for local weights. In fact, we have the following

\bigskip

\begin{lemma}
Let $w$ be the local $A_1$ weight defined in the proofs of Theorems \ref{no.conj2} and \ref{no.conj3}. Then, for all $\eps>0$, $w^{1+\eps}$ is not even a locally integrable function.
\end{lemma}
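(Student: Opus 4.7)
The plan is to reduce the local non-integrability of $w^{1+\eps}$ to the non-integrability of the individual building blocks $w_N^{1+\eps}$ over $[0,1]^d$, and then to show that for any fixed $\eps>0$ this integral diverges once $N$ is large enough. The reduction is immediate because $w=\sum_{N\ge N_0}\widetilde{w}_N$ with $\widetilde{w}_N$ supported in the pairwise disjoint translates $[0,1]^d+3^N z$; consequently $w^{1+\eps}=\widetilde{w}_N^{1+\eps}$ on $[0,1]^d+3^N z$, and it suffices to exhibit one $N$ with $\int_{[0,1]^d}w_N^{1+\eps}=\infty$.

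Next I would exploit the layered structure of $w_N$ from Proposition \ref{prop.pesos}: the weight takes the constant value $\alpha_k=a^{k+1}$ on each cube $J(Q)$ with $Q\in\mathcal T_k$, where $|J(Q)|=3^{-Nd(k+1)}$ and $\#\mathcal T_k=A^k$. Summing contributions generation by generation yields
\[
\int_{[0,1]^d} w_N^{1+\eps} \;=\; \sum_{k=0}^\infty A^k\,a^{(k+1)(1+\eps)}\,3^{-Nd(k+1)} \;=\; a^{1+\eps}\,3^{-Nd}\sum_{k=0}^\infty \rho^k,
\]
with common ratio $\rho=A\,a^{1+\eps}\,3^{-Nd}$. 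Using the defining identity $a=3^{Nd}/(1+A)$ this simplifies to $\rho=\frac{A}{1+A}\,a^{\eps}$.

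The final step is to verify that $\rho>1$ for $N$ sufficiently large. Since $A\sim 3^{(N-1)d}$, we have $a\sim 3^d$ and $A/(1+A)\to 1$ as $N\to\infty$, so $\rho\to 3^{d\eps}>1$ for every $\eps>0$. Fixing any $N$ past this threshold, which must occur in the sum defining $w$ since $N$ ranges over all integers $\ge N_0$, makes the geometric series diverge; hence $\int_{[0,1]^d+3^N z}w^{1+\eps}=\infty$, which contradicts local integrability on the compact set $[0,1]^d+3^N z$. I do not anticipate a serious obstacle here; the only subtle point is monitoring the joint behavior of $A$ and $a$ through $a=3^{Nd}/(1+A)$ so that the $\eps$-dependent blow-up of $a^\eps$ really dominates the deficit $A/(1+A)<1$.
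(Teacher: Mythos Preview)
Your proof is correct and follows essentially the same route as the paper's: both reduce to computing $\int_{[0,1]^d} w_N^{1+\eps}$ as a geometric series with ratio $\rho=\dfrac{A}{1+A}\,a^{\eps}=\dfrac{3^{\eps Nd}A}{(1+A)^{1+\eps}}$ and then observe that this ratio exceeds $1$ for $N$ large. The only cosmetic difference is that the paper keeps the expression in terms of $A$ and $3^{Nd}$ rather than $a$, and invokes the explicit bound $A\le 3^{(N-1)d}$ (which is what actually guarantees $a>1$, a point your phrase ``$a\sim 3^d$'' leaves slightly implicit).
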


\bigskip

\begin{proof}
Observe that for each $N$
\begin{eqnarray*}
\int_{Q_N} w^{1+\eps}&=&\int_{D_N} w_N^{1+\eps}=\sum_{k=0}^\infty\left(\frac{3^{Nd}}{1+A}\right)^{(k+1)(1+\eps)}A^k\, 3^{-Nd(k+1)}
\\ &=& \frac 1{A} \sum_{k=0}^\infty\left(\frac{3^{\eps Nd}A}{(1+A)^{1+\eps}}\right)^{k+1}.
\end{eqnarray*}
Since $A\leq 3^{(N-1)d}$, by taking $N$ large so that $\displaystyle \frac{3^{\eps Nd}A}{(1+A)^{1+\eps}}>1$, we see that the series diverges to $\infty$. This shows that 
$w^{1+\eps}\notin L^1_{\rm loc}(\mathbb R^d).$
\end{proof}

\bigskip


\begin{thebibliography}{00}
\bibitem{CarroPerezSoriaSoria} M.J. Carro, C. P\'erez, F. Soria, J. Soria, \emph{Maximal functions and the control of weighted inequalities
for the fractional integral operator.}  Indiana Univ. Math. J. \textbf{54} (2005), no. 3, 627--644.
\bibitem{ChanilloWheeden} S. Chanillo, R.L. Wheeden, \emph{Some weighted norm inequalities for the area integral.}  Indiana Univ. Math. J. \textbf{36} (1987), no. 2, 277--294.
\bibitem{CUMP1} D. Cruz-Uribe, J. M. Martell, C. P\'erez, \emph{Sharp weighted estimates for classical operators.} Adv. in Math., \textbf{229} (2012), no. 1, 408--441.
\bibitem{CUMP2} D. Cruz-Uribe, J. M. Martell, C. P\'erez, \emph{Weights, Extrapolation and the Theory of Rubio de Francia}. Operator Theory: Advances and Applications, \textbf{215}. Birkh\"auser/Springer Basel AG, Basel, 2011.
\bibitem{CUMP3} D. Cruz-Uribe, J. M. Martell, C. P\'erez, \emph{Sharp two-weight inequalities for singular integrals, with applications to the Hilbert transform and the Sarason conjecture.} Adv. Math. \textbf{216} (2007), no. 2, 647--676.
\bibitem{CUCP1} D. Cruz-Uribe and C. P\'erez, \emph{Sharp two-weight, weak-type norm inequalities for singular integral operators.}  Math. Res. Lett. \textbf{6} (1999), no. 3--4, 417--427.
\bibitem{CUCP2} D. Cruz-Uribe, C. P\'erez, \emph{Two weight extrapolation via the maximal operator.}  J. Funct. Anal. \textbf{174} (2000), no. 1, 1--17.
\bibitem{CUCP3} D. Cruz-Uribe and C. P\'erez \emph{Two-weight, weak-type norm inequalities for fractional integrals,
Calder\'on-Zygmund operators and commutators.} Indiana Univ. Math. J. \textbf{49} (2000), no. 2, 697--721.
\bibitem{CUCP4} D. Cruz-Uribe, C. P\'erez, \emph{On the two-weight problem for singular integral operators},  Ann. Sc. Norm. Super. Pisa Cl. Sci. (5) \textbf{1} (2002), no. 4, 821--849.
\bibitem{CURV} D. Cruz-Uribe, A. Reznikov, A. Volberg \emph{Logarithmic bump conditions and the two-weight boundedness of Calder\'on-Zygmund operators.} arXiv:1112.0676.
\bibitem{Duoandi} J. Duoandikoetxea, \emph{Fourier Analysis.} Graduate Studies in Mathematics, \textbf{29}. American Mathematical Society, Providence, RI, 2001.
\bibitem{FeffermanStein} C. Fefferman and E. M. Stein,
\emph{Some maximal inequalities.} Amer. J. Math. \textbf{93} (1971), 107--115.
\bibitem{GarciaCuervaRubiodeFrancia} J. Garc\'{\i}a-Cuerva, J.L. Rubio de Francia, \emph{Weighted norm inequalities and related topics.} North-Holland Mathematics Studies, 116. Notas de Matem\'atica [Mathematical Notes], 104. North-Holland Publishing Co., Amsterdam, 1985.
\bibitem{Grafakos} L. Grafakos, \emph{Modern Fourier analysis.} Second edition. Graduate Texts in Mathematics, 250. Springer, New York, 2009.
\bibitem{Guzman} M. de Guzm\'an, \emph{Differentiation of Integrals in $\real^n$.} Lecture Notes in Math., Vol. 481. Springer-Verlag, Berlin-New York, 1975.
\bibitem{Lerner} A. Lerner, \emph{A pointwise estimate for local sharp maximal function with applications to singular integrals.}  Bull. Lond. Math. Soc. \textbf{42} (2010), no. 5, 843--856.
\bibitem{NazarovReznikovVolberg} F. Nazarov, A. Reznikov, A. Volberg, \emph{The sharp bump condition for the two-weight problem for classical singular integral operator: the Bellman function approach.} arXiv:1106.1342.
\bibitem{Perez0} C. P\'erez, \emph{Two weighted inequalities for potential and fractional type Maximal operators.}  Indiana Univ. Math. J. \textbf{43} (1994), no. 2, 663--683.
\bibitem{Perez1} C. P\'erez, \emph{Weighted norm inequalities for singular integral operators.}  J. London Math. Soc. (2) \textbf{49} (1994), no. 2, 296--308.
\bibitem{Perez3} C. P\'erez, \emph{On sufficient conditions for the boundedness of the Hardy-Littlewood maximal operator between weighted $L^p$-spaces with different weights.} Proc. London Math. Soc. \textbf{71} (1995), No. 3, 135--157.
\bibitem{PerezWheeden} C. P\'erez, R. Wheeden, \emph{Uncertainty principle estimates for vector fields.} J. of Funct. Analysis, \textbf{181} (2001), 146--188.
\bibitem{Reguera} M.C. Reguera, \emph{On Muckenhoupt-Wheeden Conjecture.} Advances in Mathematics \textbf{227} (2011), no. 4, 1436--1450.
\bibitem{RegueraScurry} M.C. Reguera, J. Scurry, \emph{On joint estimates for maximal functions and singular integrals on weighted spaces.} Proc. Amer. Math. Soc. \textbf{141} (2013), no. 5, 1705--1717.
\bibitem{RegueraThiele} M.C. Reguera, C. Thiele \emph{The Hilbert transform does not map $L^1(Mw)$ to $L^{1,∞}(w)$.} Math. Res. Lett. \textbf{19} (2012), no. 1, 1--7.
\bibitem{Sawyer} E. Sawyer, \emph{A characterization of a two-weight norm inequality for maximal operators.} Studia Math. \textbf{75} (1982), no. 1, 1--11.
\bibitem{Stein} E.M. Stein, \emph{Singular integrals and differentiability properties of functions.} Princeton Mathematical Series, No. 30 Princeton University Press, Princeton, N.J., 1970.
\bibitem{TreilVolbergZheng} S. Treil, A. Volberg, D. Zheng, \emph{Hilbert transform, Toeplitz operators and Hankel operators,
and invariant $A_1$ weights.} Revista Mat. Iberoamericana, \textbf{13}, (1997), no. 2, 319--360.

\end{thebibliography}
\end{document}